\numberwithin{equation}{section}
\newlength{\saveparindent}
\newtheorem{theorem}{Theorem}[section]
\newtheorem{lemma}[theorem]{Lemma}
\newtheorem{proposition}[theorem]{Proposition}
\newtheorem{corollary}[theorem]{Corollary}
\let\originalleft\left
\let\originalright\right
\renewcommand{\left}{\mathopen{}\mathclose\bgroup\originalleft}
\renewcommand{\right}{\aftergroup\egroup\originalright}
\newcommand{\llav}[1]{  \left\{#1\right\} }
\newcommand{\pic}[1]{  \left\langle #1\right\rangle }
\newcommand{\conv}[1]{\xrightarrow[  #1  \hspace{0.12cm}]{}}
\newcommand{\norm}[1]{  \left\|#1\right\| }
\newcommand{\pare}[1]{\left(#1\right)}
\newcommand{\corch}[1]{  \left[#1\right] }
\newcommand{\abs}[1]{  \left|#1\right| }
\newcommand{\CAL}[1]{\mathcal{#1}}  
\newcommand{\BB}[1]{\mathbb{#1}}
\newcommand{\dis}[1]{\displaystyle{#1}}
\newcommand{\tn}[1]{\textnormal{#1}}
\newcommand{\ul}[1]{\underline{#1}}
\def\mathcolor#1#{\@mathcolor{#1}}
\def\@mathcolor#1#2#3{%
  \protect\leavevmode
  \begingroup
    \color#1{#2}#3%
  \endgroup
}
\newcommand{\blanco}[1]{\mathcolor{white}{#1}}
\def\11{\mbox{\bfseries{1}} }
\def\defi{\mathrel{\mathop:}=}
\def\11{\textnormal{\textbf{1}}}
\DeclareMathOperator{\dr}{\textit{dr}}
\title{Turing instability in a model with two interacting Ising lines: hydrodynamic limit}
\author{Monia Capanna\thanks{Universit\`a degli Studi dell'Aquila, Via Vetoio, 67100 L'Aquila, Italy. Email:\,{\tt monia.capanna@graduate.univaq.it}} ,
Nahuel Soprano-Loto\thanks{Gran Sasso Science Institute, Viale F. Crispi 7, 67100 L'Aquila, Italy. Email:\,{\tt sopranoloto@gmail.com}}}
\begin{document}

\sloppy
\maketitle
 
\begin{abstract}
\noindent 
This is the first of two articles on the study of a particle system model that exhibits  a Turing instability type effect.
The model is based on two discrete lines (or toruses) with Ising spins, 
that evolve according to a continuous time Markov process defined in terms of
macroscopic Kac potentials and local interactions.
For fixed time, we prove that the density fields weakly converge to the solution of a system of partial differential equations involving convolutions.
The presence of local interactions results in the lack of propagation of chaos, reason why the hydrodynamic limit cannot be obtained from previous results.
\end{abstract}

\section{Introduction}

In \cite{Tur37}, it is proposed a reaction-diffusion model for the kinetics of certain chemical substances in order to explain the mechanism underlying the pattern formation in various systems; we briefly describe the part of its content that is relevant for our purpose.
Consider a system of ODE's
\begin{align}\label{55}
\begin{aligned}
\frac{d}{dt}u_1&=f_1\pare{u_1,u_2}
\\[5pt]
\frac{d}{dt}u_2&=f_2\pare{u_1,u_2}
\end{aligned}
\end{align}
for which $\pare{0,0}$ is a stationary, linearly stable solution.
%By linearly stable, we mean that the eigenvalues of its linearized version have strictly negative real parts.
Consider the reaction-diffusion equation
\begin{align}
\begin{aligned}\label{464}
\partial_t u_1&=f_1\pare{u_1,u_2}+d_1 \Delta u_1
\\[5pt]
\partial_t u_2&=f_2\pare{u_1,u_2}+d_2 \Delta u_2
\end{aligned}
\end{align}
associated to the reaction system \eqref{55}.
The linear stability of this system is studied in terms of the Fourier transforms of its linearized version around $\pare{0,0}$.
As diffusions do not affect the behavior of the evolution while considering homogeneous initial conditions, the linear stability of the zero-Fourier transform is inherited from the one of system \eqref{55}.
Under certain hypotheses over $f_1$ and $f_2$, Turing proved that the diffusion coefficients $d_1$ and $d_2$ can be chosen in such a way that
system \eqref{464} exhibits linear instability in some nonzero-Fourier modes.
In other words,
for the system \eqref{464},
the pair function $\pare{\ul 0,\ul 0}$ is stable under homogeneous perturbations but unstable under inhomogeneous ones, and this happens due to the diffusion terms.
This phenomena, known as Turing instability, is anti-intuitive in the sense that the presence of diffusion is understood to have a smoothing effect.
See \cite{Mur03} for an extensive exposition of the theme.

This is the first of two articles devoted to 
introducing a different framework where
Turing instability occurs,
namely a particle system model whose motivation comes from statistical mechanics.
On the one hand, the model converges to a system of PDE's that reproduces the phenomena described in the previous paragraph with an important difference:
in our case, we have non-local mixing parts in the sense that
diffusions are replaced by convolutions by some smoothing functions.
On the other hand,
the Turing effect is also observed microscopically in the sense that pattern formation occurs if we consider the proper space-time scaling.
The microscopic  model consists of two discrete lines (or toruses) of Ising spins.
Each line of spins evolves according to a spin-flip dynamic for which the Gibbs measure associated to a Hamiltonian with ferromagnetic long range interactions is reversible.
More precisely, these interactions respond to macroscopic Kac potentials ---that play the role of the mentioned smoothing functions in the hydrodynamic limit.
We consider different inverse temperatures in each line.
In addition, there are local activating-inhibiting interactions between the two lines in the  sense
that the first line acts as an external field with intensity $\lambda$ over the second one, and vice versa with intensity $-\lambda$.
A side comment:
a first approach to microscopically reproduce Turing's instability would have been to define a model with a Glauber plus Kawasaki dynamic as in \cite{DMFL86} that approximates equations \eqref{55};
we found the approach presented here more realistic from the point of view of physics.
%The reaction version of the hydrodynamic limit, i.e. the one obtained after removing the convolutions and the spatial dependence,
%can be obtained as the hydrodynamic limit of the total magnetizations associated to the same microscopic model with mean-field instead of Kac interactions.

This first paper is devoted to the hydrodynamic limit of the model.
The local interactions represent an important obstacle from the technical point of view because they result in the lack of propagation of chaos.
For this reason, despite the presence of Kac potentials, we cannot conclude from the ideas developed in \cite{DOPT94}, for instance.
The proof relies on an (almost) closed formula obtained for the generator and the adaptation of the techniques exposed in chapter 4 of \cite{KL99} to our model.
The reaction version of the limit equations, obtained by removing the convolution terms and the spatial dependence, coincides with the one obtained by considering mean-field instead of Kac interactions.

In the second article \cite{CSL17}, we study linear stability around the equilibrium point   in the case in which the Kac potentials are Gaussian.
In this framework, we find conditions over the macroscopic parameters under which the zero-Fourier mode is stable and, at the same time, there are nonzero unstable ones.
Under these conditions, we prove that, for a time that converges to the critical time in which the process starts to be finite,
pattern formation occurs in the sense that, despite we start with a translation invariant initial condition, there are nonzero-Fourier modes that do not vanish.

\section{Definitions and statements of the results}

\subsection{The microscopic model}

Consider the unit (macroscopic) torus $\BB T$, that we identify with the real interval $\left[0,1\right)$.
We endow $\BB T$ with the periodic metric $d_{\BB T}$ defined as
\begin{align}\nonumber
d_{\BB T}\pare{r,\tilde r}\defi \min_{a\in\BB Z}\abs{r-\tilde r+a}.
\end{align}
The notions of continuity and measurability associated to $\BB T$ always refer to $d_{\BB T}$: we consider its associated topology, and the $\sigma$-algebra generated by this topology (the Borel $\sigma$-algebra).
The microscopic torus $\Lambda_\gamma$ is defined as $\pare{\gamma^{-1}\BB T}\cap \BB Z$,  $\gamma$ of the form $1/N$, $N\in\BB N$.
The limit $\gamma\to 0$ always refers to $N\to\infty$.
Elements of $\Lambda_\gamma$ are denoted by the letters $x$ and $y$.
For every $\gamma$, we define a continuous time Markov process 
$\pare{\ul\sigma_\gamma\pare{t}}_{t\ge 0}=
\pare{\pare{\sigma_{\gamma,1}\pare{t},\sigma_{\gamma,2}\pare{t}}}_{t\ge 0}$ with state space
$\llav{-1,1}^{\Lambda_\gamma}\times\llav{-1,1}^{\Lambda_\gamma}$.
All these processes are defined in the same abstract probability space $\pare{\Omega,\CAL F,\BB P}$.
For $t\ge 0$ and $i\in\llav{1,2}$, $\sigma_{\gamma,i}\pare{t,x}\in\llav{-1,1}$ denotes the spin of $\sigma_{\gamma,i}\pare{t}$ at the site $x$.
The initial distribution is given in terms of a pair of functions $\psi_1,\psi_2\in C\pare{\BB T,\corch{-1,1}}$ as follows:
the family $\llav{\sigma_{\gamma,1}\pare{0,x}:x\in\Lambda_\gamma}\cup
\llav{\sigma_{\gamma,2}\pare{0,x}:x\in\Lambda_\gamma}$ is independent, and
$\BB E\pare{\sigma_{\gamma,i}\pare{0,x}}=\psi_i\pare{\gamma x}$ for every $i\in\llav{1,2}$ and  $x\in\Lambda_\gamma$.
Before defining the generator of our Markov process, we need to introduce two kernels $\phi_1,\phi_2\in C\pare{\BB T\times \BB T,[0,\infty)}$ that define the interactions between the spins ($\phi_i$ defines the interactions between the spins in the $i$-th line, $i\in\llav{1,2}$).
We ask for them to be symmetric ($\phi_i\pare{r,\tilde r}=\phi_i\pare{\tilde r, r}$ for every $\pare{r,\tilde r}\in\BB T\times \BB T$),
translation invariant ($\phi_i\pare{r,\tilde r}=\phi_i\pare{0,d_{\BB T}\pare{r,\tilde r}}$  for every $\pare{r,\tilde r}\in\BB T\times \BB T$),
and to integrate $1$ ($\int_0^1\phi_i\pare{0,r}\dr=1$).
For $\sigma_\gamma\in\llav{-1,1}^{\Lambda_\gamma}$ and $i\in\llav{1,2}$, we define the discrete convolution
\begin{align}\nonumber
\pare{\sigma_\gamma *\phi_i}\pare{x}\defi \gamma\sum_{y\in\Lambda_\gamma}\sigma_\gamma\pare{y}\phi_i\pare{\gamma x,\gamma y}.
\end{align}
We then define the ferromagnetic Hamiltonian
\begin{align}\label{hamiltonian}
H_i\pare{\sigma_\gamma}\defi -\frac{1}{2}\sum_{x\in\Lambda_\gamma}\sigma_\gamma\pare{x}\pare{\sigma_\gamma*\phi_i}\pare{x}, \ \sigma_\gamma\in\llav{-1,1}^{\Lambda_\gamma}.
\end{align}
In each of the two lines of the state space $\llav{-1,1}^{\Lambda_\gamma}\times \llav{-1,1}^{\Lambda_\gamma}$, we have an associated inverse temperature $\beta_i>0$, $i\in\llav{1,2}$.
We also have a parameter $\lambda>0$ that describes the interaction between the two lines.
The generator of the Markov process is of the spin-flip type.
For a pair configuration $\ul\sigma_\gamma=\pare{\sigma_{\gamma,1},\sigma_{\gamma,2}}\in \llav{-1,1}^{\Lambda_\gamma}\times \llav{-1,1}^{\Lambda_\gamma}$, the rate of flipping the spin at $x$ in the first line is given by
\begin{align}\nonumber
R_{1}\pare{x,\ul\sigma_\gamma}
=\frac{\exp\llav{-\beta_1\sigma_{\gamma,1}(x)
\pare{\sigma_{\gamma,1}*\phi_1}\pare{x}}
\exp\llav{-\beta_1\lambda\sigma_{\gamma,1}(x)
 \sigma_{\gamma,2}(x)}
}
{2\cosh  \llav{
\beta_1\pare{\sigma_{\gamma,1}*\phi_1}\pare{x}+\beta_1\lambda\sigma_{\gamma,2}\pare{x}}},
\end{align}
while the rate of flipping the spin at $x$ in the second line is given by
\begin{align}\nonumber
R_{2}\pare{x,\ul\sigma_{\gamma}}
=\frac{\exp\llav{-\beta_2\sigma_{\gamma,2}(x)
\pare{\sigma_{\gamma,2}*\phi_2}\pare{x}}
\exp\llav{\beta_2\lambda\sigma_{\gamma,2}(x)
 \sigma_{\gamma,1}(x)}
}
{2\cosh  \llav{
\beta_2\pare{\sigma_{\gamma,2}*\phi_2}\pare{x}-\beta_2\lambda\sigma_{\gamma,1}\pare{x}}}.
\end{align}
In these rates, the denominator are mobility coefficients.
In the numerators, we have one part that is the reversible dynamic associated to the Hamiltonian \eqref{hamiltonian} with inverse temperature $\beta_i$, $i\in\llav{1,2}$.
At the same time, the lines are correlated in an attractive-repulsive way as follows: $\sigma_{\gamma,1}(t,x)$ wants to coincide with $\sigma_{\gamma,2}(t,x)$ with a rate that depends on $\beta_1\lambda$, while $\sigma_{\gamma,2}(t,x)$ wants to differ from $\sigma_{\gamma,1}(t,x)$ with a rate that depends on $\beta_2\lambda$.
The choice of the sign of $\lambda$ is not a restriction: the case $\lambda=0$ makes the lines independent and the problem not interesting; the case $\lambda<0$ only interchanges the roles of the lines.

%It is convenient to explicitly describe the probability space where the Markov processes are defined.
%Let $\Omega_1\defi \corch{0,1}\times\corch{0,1}$, equiped with the Borel $\sigma$-algebra $\CAL F_1$, and the Lebesgue measure $\BB P_1$.
%Let $\Omega_2$ be the set of all subsets of $\BB R^2$ without limit points, equipped with its natural $\sigma$-algebra (see [Mester-Roy], for example), and let $\BB P_2$ be the Poisson probability of intensity $1$ defined on it.
%The probability space is the product $\pare{\Omega=\Omega_1\times \Omega_2,\CAL F=\CAL F_1\times \CAL F_2,\BB P=\BB P_1\times \BB P_2}$.
%The Markov process can be constructed in terms of $\pare{\omega_1,\omega_2}\in\Omega$ as in [Bremaud].
%The randomness of $\sigma_{\gamma,1}\pare{0}$ is constructed as a function of the first coordinate of $\omega_1$, while the randomness $\sigma_{\gamma,2}\pare{0}$ is a function of its second coordinate; the independence between $\sigma_{\gamma,1}\pare{0}$ and $\sigma_{\gamma,2}\pare{0}$ follows as $\omega_1$ has independent coordinates.
%The evolution is a function of the initial condition $\ul{\sigma}_{\gamma}\pare{0}$ and $\omega_2$.

\subsection{Hydrodynamic limit}\label{abc}

For every $\gamma$, the correlation field is the random process $\pare{\eta_{\gamma}\pare{t}}_{t\ge 0}$, taking values in $\llav{-1,1}^{\Lambda_\gamma}$, defined as
$\eta_{\gamma}\pare{t,x}\defi\sigma_{\gamma,1}\pare{t,x}\sigma_{\gamma,2}\pare{t,x}$, $x\in\Lambda_\gamma$.
The solutions of the system of partial differential equations
\begin{align}
\begin{aligned}\label{hydro1}
\partial_t u_1\pare{t,r}=&-u_1 
\\[5pt] &+\frac{1}{2}\corch{\tanh\pare{\beta_1 u_1 * \phi_{1}+\beta_1\lambda}
+\tanh\pare{\beta_1 u_1 * \phi_{1}-\beta_1\lambda}} 
\\[5pt] & +u_2\frac{1}{2}\corch{\tanh\pare{\beta_1 u_1 * \phi_{1}+\beta_1\lambda}
-\tanh\pare{\beta_1 u_1 * \phi_{1}-\beta_1\lambda}}
\end{aligned}
\end{align}
\begin{align}
\begin{aligned}\label{hydro2}
\partial_t u_2\pare{t,r}=&-u_2 
\\[5pt] &+\frac{1}{2}\corch{\tanh\pare{\beta_2 u_2 * \phi_{2}+\beta_2\lambda}
+\tanh\pare{\beta_2 u_2 * \phi_{2}-\beta_2\lambda}} 
\\[5pt] 
& -u_1\frac{1}{2}\corch{\tanh\pare{\beta_2 u_2 * \phi_{2}+\beta_2\lambda}
-\tanh\pare{\beta_2 u_2 * \phi_{2}-\beta_2\lambda}}
\end{aligned}
\end{align}
\begin{align}
\begin{aligned}\label{hydro3}
\partial_t v\pare{t,r}=&-2v
\\[5pt]  &+\frac{1}{2}\corch{\tanh\pare{\beta_1 u_1 * \phi_{1}+\beta_1\lambda}
-\tanh\pare{\beta_1 u_1 * \phi_{1}-\beta_1\lambda}}
\\[5pt] 
&-\frac{1}{2}\corch{\tanh\pare{\beta_2u_2 * \phi_{2}+\beta_2\lambda}
+\tanh\pare{\beta_2 u_2 * \phi_{2}-\beta_2\lambda}}
\\[5pt]
& +\frac{u_1}{2}\corch{\tanh\pare{\beta_2 u_2 * \phi_{2}+\beta_2\lambda}
+\tanh\pare{\beta_2 u_2 * \phi_{2}-\beta_2\lambda}}
\\[5pt] 
 & +\frac{u_2}{2}\corch{\tanh\pare{\beta_1 u_1 * \phi_{1}+\beta_1\lambda}
+\tanh\pare{\beta_1 u_1 * \phi_{1}-\beta_1\lambda}}
\end{aligned}
\end{align}
are respectively the limits of $\sigma_{\gamma,1}\pare{t}$, $\sigma_{\gamma,2}\pare{t}$ and $\eta_\gamma\pare{t}$.
The precise statement is the content of the following theorem.

\begin{theorem}\label{hydrolimit}
Fix $T>0$ and $G\in C\pare{\BB T,\BB R}$.
The limits
\begin{align}\nonumber
\begin{aligned}
&\sup_{0\le t\le T}\abs{\pic{\sigma_{\gamma,i}\pare{t},G} -\pic{u_i\pare{t,\cdot},G}}\conv{\gamma\to 0}0, \ i\in\llav{1,2}
\\[5pt]
&\sup_{0\le t\le T}\abs{\pic{\eta_{\gamma}\pare{t},G}-\pic{v\pare{t,\cdot},G}}\conv{\gamma\to 0}0
\end{aligned}
\end{align}
hold in $\BB P$-probability,
where $u_1,u_2,v:[0,\infty)\times \BB T \to \BB R$ are the solutions of the system of partial differential equations \tn{(\ref{hydro1}-\ref{hydro3})}
with initial condition $u_i\pare{0,\cdot}=\psi_i$, $i\in\llav{1,2}$, and $v\pare{0,\cdot}=\psi_1\pare{\cdot}\psi_2\pare{\cdot}$.
\end{theorem}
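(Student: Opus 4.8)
The plan is to adapt the martingale method of chapter~4 of \cite{KL99} to this non-conservative dynamics. The decisive first step is an \emph{exact} computation of the action of the generator $L_\gamma$ on the linear observables $\pic{\sigma_{\gamma,i},G}=\gamma\sum_{x\in\Lambda_\gamma}\sigma_{\gamma,i}(x)G(\gamma x)$ and $\pic{\eta_\gamma,G}$. A flip at $x$ in line~$i$ sends $\sigma_{\gamma,i}(x)\mapsto-\sigma_{\gamma,i}(x)$, and, writing $a_x\defi\beta_1\pare{\sigma_{\gamma,1}*\phi_1}(x)+\beta_1\lambda\sigma_{\gamma,2}(x)$, the rate is of the reversible form $R_1(x,\ul\sigma_\gamma)=e^{-\sigma_{\gamma,1}(x)a_x}/\pare{2\cosh a_x}$; the elementary identity $-2s\,e^{-sa}/\pare{2\cosh a}=\tanh a-s$, valid for $s\in\llav{-1,1}$, then gives
\begin{align}\nonumber
L_\gamma\pic{\sigma_{\gamma,1},G}=\gamma\sum_{x\in\Lambda_\gamma}G(\gamma x)\pare{\tanh\corch{\beta_1\pare{\sigma_{\gamma,1}*\phi_1}(x)+\beta_1\lambda\sigma_{\gamma,2}(x)}-\sigma_{\gamma,1}(x)},
\end{align}
the analogous formula for $\pic{\sigma_{\gamma,2},G}$, and, since a flip in either line reverses $\eta_\gamma(x)$ and the identity applies to $\sigma_{\gamma,1}(x)\tanh[\cdots]$ and $\sigma_{\gamma,2}(x)\tanh[\cdots]$, a formula for $L_\gamma\pic{\eta_\gamma,G}$ whose summand is $\sigma_{\gamma,2}(x)\tanh a_x+\sigma_{\gamma,1}(x)\tanh\corch{\beta_2\pare{\sigma_{\gamma,2}*\phi_2}(x)-\beta_2\lambda\sigma_{\gamma,1}(x)}-2\eta_\gamma(x)$. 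Finally, the splitting $f\pare{\sigma_{\gamma,j}(x)}=\tfrac{1+\sigma_{\gamma,j}(x)}{2}f(1)+\tfrac{1-\sigma_{\gamma,j}(x)}{2}f(-1)$ turns each $\tanh$ into a combination of its $+\beta_i\lambda$ and $-\beta_i\lambda$ versions, and (using $\sigma_{\gamma,j}(x)^2=1$) one recognises in these expressions exactly the right-hand sides of \tn{(\ref{hydro1})}, \tn{(\ref{hydro2})}, \tn{(\ref{hydro3})} evaluated on the empirical fields. This is the ``almost closed formula'' of the introduction: the triple $(\sigma_{\gamma,1},\sigma_{\gamma,2},\eta_\gamma)$ is a self-contained set of observables, which is why $v$ must be carried along even though $v\ne u_1u_2$ in general.

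By Dynkin's formula, $M^{i,G}_\gamma(t)\defi\pic{\sigma_{\gamma,i}(t),G}-\pic{\sigma_{\gamma,i}(0),G}-\int_0^tL_\gamma\pic{\sigma_{\gamma,i}(s),G}\,ds$ and its analogue $M^{v,G}_\gamma$ are martingales. Each flip changes a linear observable by $O(\gamma)$, there are $O(\gamma^{-1})$ sites, and the rates are bounded by $1$; hence the predictable quadratic variations are $O(\gamma)$ and, by Doob's inequality, $\sup_{0\le t\le T}\abs{M^{i,G}_\gamma(t)}$ and $\sup_{0\le t\le T}\abs{M^{v,G}_\gamma(t)}$ tend to $0$ in $L^2\pare{\BB P}$. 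Since the integrands $L_\gamma\pic{\sigma_{\gamma,i}(s),G}$ are bounded by a constant times $\norm{G}_\infty$ uniformly in $\gamma$ and $s$, the processes $t\mapsto\pic{\sigma_{\gamma,i}(t),G}$ and $t\mapsto\pic{\eta_\gamma(t),G}$ are tight with a $\gamma$-independent equicontinuity modulus; testing against a countable dense family of $G$'s (Mitoma's criterion) gives tightness, in $D\pare{[0,T],\CAL M^3}$ with the weak topology, of the triple of signed empirical measures $\pi^1_\gamma,\pi^2_\gamma,\pi^v_\gamma$, where $\pi^i_\gamma(t)\defi\gamma\sum_x\sigma_{\gamma,i}(t,x)\delta_{\gamma x}$ and $\pi^v_\gamma$ is built from $\eta_\gamma$. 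As $\abs{\pic{\sigma_{\gamma,i}(t),G}}\le\gamma\sum_x\abs{G(\gamma x)}\to\int_{\BB T}\abs{G(r)}\,dr$, every limit point takes values in measures that are absolutely continuous with densities in $[-1,1]$; write these $u_1(t,\cdot),u_2(t,\cdot),v(t,\cdot)$. The hypotheses on the initial law give $\pi^i_\gamma(0)\to\psi_i\,dr$ and $\pi^v_\gamma(0)\to\psi_1\psi_2\,dr$ in $L^2$.

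The core of the argument is the identification of the limit, where the smoothing by the Kac kernels plays the role that propagation of chaos would play for a local dynamics. A term depending only on one smoothed field, such as $\gamma\sum_xG(\gamma x)\tanh\corch{\beta_1\pare{\sigma_{\gamma,1}(s)*\phi_1}(x)\pm\beta_1\lambda}$, is handled by noting that $r\mapsto\int_{\BB T}\phi_1(r,r')\,\pi^1_\gamma(s)(dr')$ is, for every configuration, equicontinuous with the modulus of $\phi_1$; combined with weak convergence of the empirical measures this forces uniform convergence to $u_1(s,\cdot)*\phi_1$, so the Riemann sum converges to $\int_{\BB T}G(r)\tanh\corch{\beta_1\pare{u_1(s,\cdot)*\phi_1}(r)\pm\beta_1\lambda}\,dr$. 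A term carrying one microscopic spin times such a field, say $\gamma\sum_x\sigma_{\gamma,2}(s,x)\,\Psi_\gamma(s,x)$ with $\Psi_\gamma(s,x)=G(\gamma x)\,\psi\pare{\beta_1\pare{\sigma_{\gamma,1}(s)*\phi_1}(x)}$ for a fixed Lipschitz $\psi$, is treated by first replacing $\sigma_{\gamma,2}(s,x)$ by its average over a box of side $\ell$: interchanging the two sums, the error is at most $\gamma\sum_y\abs{\sigma_{\gamma,2}(s,y)}\,\omega(\gamma\ell)\le\omega(\gamma\ell)$, with $\omega$ the configuration-independent modulus of continuity of $\Psi_\gamma$, hence negligible once $\gamma\ell\to0$; choosing $\ell\sim\epsilon\gamma^{-1}$, the box average is a continuous function of $\pi^2_\gamma(s)$ and converges to $\tfrac{1}{2\epsilon}\int_{\abs{r'-r}\le\epsilon}u_2(s,r')\,dr'$, which tends to $u_2(s,r)$ as $\epsilon\to0$. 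Feeding these convergences into the martingale decomposition (the uniform bounds justify dominated convergence inside the time integral) shows that $(u_1,u_2,v)$ solves the integral form of \tn{(\ref{hydro1}-\ref{hydro3})} with $u_i(0,\cdot)=\psi_i$ and $v(0,\cdot)=\psi_1\psi_2$.

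It then remains to prove that this integral system has at most one solution among $\corch{-1,1}$-valued paths; this forces all subsequential limits to coincide, hence the full family converges, and, the limit being deterministic, convergence in distribution upgrades to convergence in $\BB P$-probability. The map sending $(u_1,u_2,v)$ to the right-hand sides of \tn{(\ref{hydro1}-\ref{hydro3})} is Lipschitz in the supremum norm: convolution with $\phi_i$ is non-expansive since $\int_0^1\phi_i\pare{0,r}\dr=1$, $\tanh$ and the combinations that appear are globally Lipschitz, and products of uniformly bounded functions are Lipschitz in each factor; a Gronwall estimate on $\sup_{s\le t}\norm{u(s)-\tilde u(s)}_\infty$ then gives uniqueness, and the limit point itself supplies existence (the equations contain no spatial derivatives, so an $L^\infty$ solution of the integral form is automatically $C^1$ in time and solves \tn{(\ref{hydro1}-\ref{hydro3})}). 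The uniform-in-$t$ statement of the theorem follows from the Skorokhod convergence of $\pic{\sigma_{\gamma,i}(\cdot),G}$ to $\pic{u_i(\cdot,\cdot),G}$ together with continuity in time of the limit. The only genuinely delicate point is the first step: without the closed formula one does not see that $(\sigma_{\gamma,1},\sigma_{\gamma,2},\eta_\gamma)$ closes, and---the two lines being correlated at each site---the result cannot be imported from the Kac--Glauber analysis of \cite{DOPT94}.
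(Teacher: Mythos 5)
Your proposal is correct and follows essentially the same route as the paper: the same exact generator computation with the $\frac{1\pm\sigma}{2}$ splitting trick, Dynkin martingales controlled by Doob's inequality, tightness from the uniform generator bound, identification of the limit by exploiting that the Kac-smoothed drift is a weakly continuous functional of the empirical measure triple, and uniqueness of the limiting integral system upgraded to uniform convergence in probability. The only (immaterial) difference is that you implement the identification of the one-spin-times-smoothed-field terms via block averages, whereas the paper packages the same equicontinuity observation as continuity of the drift functional on the Skorokhod space and invokes the continuous mapping theorem.
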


The function $G$ has to be understood as a test function, and the inner products are defined as
\begin{align}\nonumber
\pic{\sigma_\gamma,G}\defi\gamma\sum_{x\in\Lambda_\gamma}\sigma_\gamma\pare{x}G\pare{\gamma x} \ \ \tn{and} \ \
\pic{\psi,G}\defi
\int_0^1 \psi\pare{r}G\pare{r}dr
\end{align}
for $\sigma_\gamma\in\llav{-1,1}^{\Lambda_\gamma}$ and $\psi\in L^1\pare{\BB T,\BB C}$.
We are  also using the convolution notation 
\begin{align}\nonumber
\psi *\phi_i\pare{r}=\int_0^1  \psi\pare{\tilde r}\phi_i\pare{\tilde r,r}\,d\tilde r.
\end{align}
As it is standard that the system of PDE's in the previous theorem has a unique solution (because of the Lipschitzianity of its data), we will skip the proof.
Observe that, despite the initial condition for $v$ is the product of the initial conditions for $u_1$ and $u_2$, this is not the case for strictly positive times; in other words, propagation of chaos does not occur.

By considering $\phi_1\pare{0,\cdot}=\phi_2\pare{0,\cdot}\equiv 1$, we obtain the mean-field case in which, in each line, the spins interact with all the others with the same intensity.
The main observable in this case is the total magnetization $m_{\gamma,i}\pare{t}\defi\dis{\gamma\sum_{x\in\Lambda_\gamma}\sigma_{\gamma,i}\pare{t,x}}=\pic{\sigma_{\gamma,i}\pare{t},\ul 1}$.
The following corollary is immediate from theorem \ref{hydrolimit}. 

\begin{corollary}
For every $T>0$, the limits
\begin{align}\nonumber
\sup_{0\le t\le T}\abs{m_{\gamma,i}\pare{t,\cdot} -m_i\pare{t}}\conv{\gamma\to 0}0, \ i\in\llav{1,2}
\end{align}
hold in $\BB P$-probability, where $m_1$ and $m_2$ are the solutions of the system of differential equations
\begin{align}
\begin{aligned}\label{mfeq1}
\frac{d}{dt}m_1\pare{t}=&-m_1 
\\[5pt] &+\frac{1}{2}\corch{\tanh\pare{\beta_1 m_1 +\beta_1\lambda}
+\tanh\pare{\beta_1 m_1 -\beta_1\lambda}} 
\\[5pt] & +m_2\frac{1}{2}\corch{\tanh\pare{\beta_1 m_1 +\beta_1\lambda}
-\tanh\pare{\beta_1 m_1 -\beta_1\lambda}}
\end{aligned}
\end{align}
\begin{align}
\begin{aligned}\label{mfeq2}
\frac{d}{dt}m_2\pare{t}=&-m_2 
\\[5pt] &+\frac{1}{2}\corch{\tanh\pare{\beta_2 m_2+\beta_2\lambda}
+\tanh\pare{\beta_2 m_2 -\beta_2\lambda}} 
\\[5pt] 
& -m_1\frac{1}{2}\corch{\tanh\pare{\beta_2 m_2 +\beta_2\lambda}
-\tanh\pare{\beta_2 m_2-\beta_2\lambda}}.
\end{aligned}
\end{align}
\end{corollary}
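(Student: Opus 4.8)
The plan is to obtain the corollary directly from Theorem~\ref{hydrolimit}, using the observation that the mean-field choice $\phi_1\pare{0,\cdot}=\phi_2\pare{0,\cdot}\equiv 1$ collapses each convolution $u_i*\phi_i$ into the spatial average of $u_i$. Concretely, I would first apply Theorem~\ref{hydrolimit} with the test function $G=\ul 1$. By the definitions of the inner products, $\pic{\sigma_{\gamma,i}\pare{t},\ul 1}=m_{\gamma,i}\pare{t}$ and $\pic{u_i\pare{t,\cdot},\ul 1}=\int_0^1 u_i\pare{t,r}\,dr$; denoting this last quantity by $m_i\pare{t}$, the theorem yields $\sup_{0\le t\le T}\abs{m_{\gamma,i}\pare{t}-m_i\pare{t}}\conv{\gamma\to 0}0$ in $\BB P$-probability for $i\in\llav{1,2}$, which is precisely the asserted convergence. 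It then only remains to check that the functions $m_1,m_2$ defined this way solve the system \tn{(\ref{mfeq1}-\ref{mfeq2})}.

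For that step, I would use that $\phi_i\pare{0,\cdot}\equiv 1$, together with the translation invariance and the normalization $\int_0^1\phi_i\pare{0,r}\,dr=1$, gives $\pare{u_i*\phi_i}\pare{t,r}=\int_0^1 u_i\pare{t,\tilde r}\,d\tilde r=m_i\pare{t}$ for every $r\in\BB T$; in particular the convolution term no longer depends on $r$. Integrating equation \tn{(\ref{hydro1})} over $r\in\BB T$ and interchanging $\tfrac{d}{dt}$ with $\int_0^1(\cdot)\,dr$, all the $\tanh\pare{\cdots}$ factors come out of the integral since they are constant in $r$, and $\int_0^1 u_2\pare{t,r}\,dr=m_2\pare{t}$; the identity obtained is exactly \tn{(\ref{mfeq1})}. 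The same manipulation applied to \tn{(\ref{hydro2})} produces \tn{(\ref{mfeq2})}. The initial data are $m_i\pare{0}=\int_0^1\psi_i\pare{r}\,dr$, and the solution of \tn{(\ref{mfeq1}-\ref{mfeq2})} is unique by the Lipschitzianity of the right-hand sides (again analogous to the PDE system), so $m_1,m_2$ are well defined as claimed and the proof is complete.

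I do not expect any genuine obstacle here: the only point needing a remark is the legitimacy of differentiating under the integral sign, which is immediate because the right-hand sides of \tn{(\ref{hydro1})} and \tn{(\ref{hydro2})} are continuous and uniformly bounded on $[0,T]\times\BB T$ (the solutions being $[-1,1]$-valued and $\tanh$ bounded), so dominated convergence applies to the difference quotients. Everything else is a direct substitution, which is why the statement is labeled immediate.
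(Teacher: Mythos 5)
Your argument is correct and is exactly the route the paper intends (the paper offers no written proof, declaring the corollary immediate from Theorem~\ref{hydrolimit}): take $G=\ul 1$, note that $\phi_i\equiv 1$ turns $u_i*\phi_i$ into the spatial average $m_i(t)$, and integrate \tn{(\ref{hydro1}-\ref{hydro2})} over $\BB T$ to see that the averages satisfy the closed system \tn{(\ref{mfeq1}-\ref{mfeq2})}, identified with its unique solution via Lipschitz continuity. No gaps; the remark on differentiating under the integral sign is the only technical point and you justify it adequately.
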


%and has the following interpretation: the reaction equation associated to the system (\ref{hydro1}-\ref{hydro3}) can be obtained as the hydrodynamic limit of the mean-field case.

Observe that $\pare{0,0}$ is an equilibrium point of system (\ref{mfeq1}-\ref{mfeq2}).
Observe also that system (\ref{hydro1}-\ref{hydro2}) is obtained from this one by adding convolutions. For this reason, in analogy with the standard terminology used for reaction-diffusion equations,  system (\ref{mfeq1}-\ref{mfeq2}) can be interpreted as the reaction part of (\ref{hydro1}-\ref{hydro2}) if we substitute diffusions by convolutions.
This explains the analogy with the original case presented by Turing:
systems (\ref{mfeq1}-\ref{mfeq2}) and (\ref{hydro1}-\ref{hydro2}) respectively are our versions of systems \eqref{55} and \eqref{464}.
In \cite{CSL17}, we study conditions under which the linearized version of system (\ref{hydro1}-\ref{hydro2}) has stable zero-Fourier mode 
(or, equivalently, the linearized version of system (\ref{mfeq1}-\ref{mfeq2}) is stable)
but some unstable nonzero ones.

\section{Proof of Theorem \ref{hydrolimit}}

The main point is an almost closed formula for the generator applied to the density and correlations fields, whose obtainment requires the use of a trick that is specific to Ising spins and that has been used before in \cite{DOPT94}.
Once this formula has been obtained, we are able to adapt the techniques developed in chapter 4 of \cite{KL99} to our case:
we first control the martingale terms,
we next prove tightness for the distributions of the process,
and we finally characterize and prove uniqueness of limit point.

\subsubsection*{Almost closed formula for the generator}

Let $E_\gamma\defi\llav{-1,1}^{\Lambda_\gamma}\times \llav{-1,1}^{\Lambda_\gamma}$ be the state-space of the Markov process, and let $L_\gamma:L^\infty\pare{E_\gamma,\BB R}\to L^\infty\pare{E_\gamma,\BB R}$ be its generator:
\begin{align}\nonumber
\begin{aligned}\nonumber
&\pare{L_\gamma F}\pare{\ul\sigma_\gamma} 
\\[5pt]
& \quad=\sum_{x\in\Lambda_\gamma}\llav{R_1\pare{\ul \sigma_\gamma,x}\corch{F\pare{\sigma_{\gamma,1}^x,\sigma_{\gamma,2}}-F\pare{\sigma_{\gamma,1},\sigma_{\gamma,2}}}
+R_2\pare{\ul \sigma_\gamma,x}\corch{F\pare{\sigma_{\gamma,1},\sigma_{\gamma,2}^x}-F\pare{\sigma_{\gamma,1},\sigma_{\gamma,2}}}}.
\end{aligned}
\end{align}
Here $\sigma_{\gamma,i}^x$ is obtained from $\sigma_{\gamma,i}$ by flipping the value of the spin at the site $x$.
For $G\in C\pare{\BB T,\BB R}$ and $i\in\llav{1,2}$, we use the notation $L_\gamma\pic{\sigma_{\gamma,i},G}$ to denote $\pare{L_\gamma F_G}\pare{\ul \sigma_\gamma}$ for $F_G$ the map defined as $F_G\pare{\ul\sigma_{\gamma}}= \pic{\sigma_{\gamma,1},G}$.
Observe that, for every $\ul\sigma_\gamma\in E_\gamma$,
\begin{align}\nonumber
\begin{aligned}
L_\gamma\pic{\sigma_{\gamma,1}, G}&=\sum_{x\in\Lambda_\gamma}R_{1}\pare{x,\ul\sigma_{\gamma}}\pare{\pic{\sigma_{\gamma,1}^x, G}-\pic{\sigma_{\gamma,1}, G}}\\[5pt]
&=-2\gamma\sum_{x\in\Lambda_\gamma}R_{1}\pare{x,\ul\sigma_{\gamma}}\sigma_{\gamma,1}(x)G(\gamma x)\\[5pt]
&=-\pic{\sigma_{\gamma,1}, G}+\gamma\sum_{x\in\Lambda_\gamma}\tanh\corch{ \beta_1  \pare{\sigma_{\gamma,1}*\phi_1}\pare{x}+\beta_1\lambda\sigma_{\gamma,2}\pare{x}}G(\gamma x);
\end{aligned}
\end{align}
the last step follows after writing explicitly the rate $R_1\pare{x, \ul\sigma_{\gamma}}$ and using identity $\sigma_{\gamma,1}\pare{x}=\frac{\sigma_{\gamma,1}\pare{x}+1}{2}+\frac{\sigma_{\gamma,1}\pare{x}-1}{2}$. Reading now the coefficient $1$ in front of the hyperbolic tangent as $\frac{1+\sigma_{\gamma,2}\pare{x}}{2}+\frac{1-\sigma_{\gamma,2}\pare{x}}{2}$, we get
\begin{align}
\begin{aligned}\label{36}
&L_\gamma\pic{\sigma_{\gamma,1}, G}
\\[5pt]
&\quad=-\pic{\sigma_{\gamma,1}, G}
\\[5pt]
&\quad\blanco{=}+\frac{\gamma}{2}\sum_{x\in\Lambda_\gamma}\llav{\tanh\corch{ \beta_1  \pare{\sigma_{\gamma,1}*\phi_1}\pare{x}+\beta_1\lambda}+\tanh\corch{ \beta_1  \pare{\sigma_{\gamma,1}*\phi_1}\pare{x}-\beta_1\lambda}}G(\gamma x)
\\[5pt]
&\quad\blanco{=}+\frac{\gamma}{2}\sum_{x\in\Lambda_\gamma}\sigma_{\gamma,2}\pare{x}\llav{\tanh\corch{ \beta_1  \pare{\sigma_{\gamma,1}*\phi_1}\pare{x}+\beta_1\lambda}-\tanh\corch{ \beta_1  \pare{\sigma_{\gamma,1}*\phi_1}\pare{x}-\beta_1\lambda}}G(\gamma x).
\end{aligned}
\end{align}
Analogously, we get identities
\begin{align}
\begin{aligned}\label{37}
&L_\gamma\pic{\sigma_{\gamma,2}, G}
\\[5pt]
&\quad=-\pic{\sigma_{\gamma,2}, G}
\\[5pt]
&\quad\blanco{=}+\frac{\gamma}{2}\sum_{x\in\Lambda_\gamma}\llav{\tanh\corch{ \beta_2 \pare{\sigma_{\gamma,2}*\phi_2}\pare{x}+\beta_2\lambda}+\tanh\corch{ \beta_2  \pare{\sigma_{\gamma,2}*\phi_2}\pare{x}-\beta_2\lambda}}G(\gamma x)
\\[5pt]
&\quad\blanco{=}-\frac{\gamma}{2}\sum_{x\in\Lambda_\gamma}\sigma_{\gamma,1}\pare{x}\llav{\tanh\corch{ \beta_2  \pare{\sigma_{\gamma,2}*\phi_1}\pare{x}+\beta_2\lambda}-\tanh\corch{ \beta_2 \pare{\sigma_{\gamma,2}*\phi_2}\pare{x}-\beta_2\lambda}}G(\gamma x)
\end{aligned}
\end{align}
and
\begin{equation}
\begin{split}\label{38}
&L_\gamma\pic{\sigma_{\gamma,1}\sigma_{\gamma,2},G} 
\\[5pt]
&\quad=-2\gamma\sum_{x\in \Lambda_\gamma}\sigma_{\gamma, 1}(x)\sigma_{\gamma, 2}(x)G(\gamma x) 
\\[5pt]
&\quad\blanco{=}+\frac{\gamma}{2}\sum_{x\in \Lambda_\gamma}\llav{\tanh\corch{\beta_1 \pare{\sigma_{\gamma,1}*\phi_1}\pare{x}+\beta_1\lambda}-\tanh\corch{\beta_1 \pare{\sigma_{\gamma,1}*\phi_1}\pare{x}-\beta_1\lambda}}G(\gamma x)
\\[5pt]
&\quad\blanco{=}-\frac{\gamma}{2}\sum_{x\in \Lambda_\gamma}\llav{\tanh\corch{\beta_2\pare{\sigma_{\gamma,2}*\phi_2}\pare{x}+\beta_2\lambda}-\tanh\corch{\beta_2 \pare{\sigma_{\gamma,2}*\phi_2}\pare{x}-\beta_2\lambda}}G(\gamma x)
\\[5pt]
&\quad\blanco{=}+\frac{\gamma}{2}\sum_{x\in \Lambda_\gamma}\sigma_{\gamma, 1}(x)\llav{\tanh\corch{\beta_2 \pare{\sigma_{\gamma,2}*\phi_2}\pare{x}+\beta_2\lambda}+\tanh\corch{\beta_2 \pare{\sigma_{\gamma,2}*\phi_2}\pare{x}-\beta_2\lambda}}G(\gamma x)
\\[5pt]
&\quad\blanco{=}+\frac{\gamma}{2}\sum_{x\in \Lambda_\gamma}\sigma_{\gamma, 2}(x)\llav{\tanh\corch{\beta_1 \pare{\sigma_{\gamma,1}*\phi_1}\pare{x}+\beta_1\lambda}+\tanh\corch{\beta_1\pare{\sigma_{\gamma,1}*\phi_1}\pare{x}-\beta_1\lambda}}G(\gamma x).
\end{split}
\end{equation}
The hydrodynamic limit is in evidence in the previous identities.

\subsubsection*{Control of the martingale terms}

We need some observations about the mathematical structure behind our Markov process.
If we consider the distance $\textbf{1}\llav{\ul\sigma\neq \tilde{\ul\sigma}}$ in the state space $E_\gamma$, it makes sense to define the set of \textit{c\`adl\`ag} functions $\CAL D\pare{\left[0,\infty\right),E_\gamma}$.
We think the Markov process $\ul\sigma_\gamma\pare{\cdot}$ as a $\CAL D\pare{\left[0,\infty\right),E_\gamma}$-random element (always defined in the same measure space $\pare{\Omega,\CAL F}$),
where $\CAL D\pare{\left[0,\infty\right),E_\gamma}$ is endowed with the $\sigma$-algebra generated by the projections $\ul\sigma_\gamma\pare{\cdot}\mapsto \ul\sigma_\gamma\pare{t}$.
For fixed $\gamma$, the stochastic process $\pare{\ul\sigma_\gamma\pare{t}}_{t\ge 0}$
generates the filtration $\pare{\CAL F_{\gamma}\pare{t}}_{t\ge 0}$ in $\pare{\Omega,\CAL F}$ defined by $\CAL F_\gamma\pare{t}\defi\sigma\llav{\ul\sigma_\gamma\pare{s}:0\le s\le t}$.

For $G\in C\pare{\BB T,\BB R}$, the processes
\begin{align}
\begin{aligned}\nonumber
&M^{G}_{\gamma,i}\pare{t}\defi \pic{\sigma_{\gamma,i}\pare{t},G}-\pic{\sigma_{\gamma,i}\pare{0},G}
-\int_0^tL_\gamma\pic{\sigma_{\gamma,i}(s),G}ds, \quad i\in\llav{1,2}
\\[5pt]
& M^{G}_{\gamma,3}\pare{t}\defi \pic{\eta_{\gamma}\pare{t},G}-\pic{\eta_{\gamma}\pare{0},G}
-\int_0^tL_\gamma\pic{\eta_{\gamma}(s),G}ds
\end{aligned}	
\end{align}
are martingales associated to the filtration $\pare{\CAL F_{\gamma}\pare{t}}_{t\ge 0}$ (see lemma 5.1 in the appendix 1 of \cite{KL99}, for instance).

\begin{proposition}\label{ddkk}
There exists a constant $C$ depending only on the macroscopic parameters $\llav{\beta_1,\beta_2,\lambda}$ and on the functions $\phi_1, \phi_2$ such that
\begin{align}\nonumber
\BB P\pare{\sup_{t\in\corch{0,T}}\abs{M_{\gamma,i}^{G}\pare{t}}>\zeta}\le
\frac{C\norm{G}_\infty T\gamma}{\zeta^2}
\end{align}
for every  $\zeta>0$ and  $G\in C\pare{\BB T,\BB R}$.
\end{proposition}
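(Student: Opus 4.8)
The plan is to use the standard Doob-type estimate for martingales associated to jump processes, which expresses the quadratic variation in terms of the generator acting on squares. Specifically, for a jump process with generator $L_\gamma$ and a bounded function $F$, the process $M_F(t)\defi F(\ul\sigma_\gamma(t))-F(\ul\sigma_\gamma(0))-\int_0^t L_\gamma F(\ul\sigma_\gamma(s))\,ds$ is a martingale whose predictable quadratic variation is $\pic{M_F}(t)=\int_0^t\bigl(L_\gamma F^2-2F L_\gamma F\bigr)(\ul\sigma_\gamma(s))\,ds$ (see the appendix of \cite{KL99}). First I would apply this with $F=F_G$ (and the analogous functions for $\sigma_{\gamma,2}$ and $\eta_\gamma$), so that $M_F=M_{\gamma,i}^G$.

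The key computation is then to bound $\bigl(L_\gamma F_G^2-2F_G L_\gamma F_G\bigr)(\ul\sigma_\gamma)$ pointwise. Writing out the generator explicitly, for the first-line observable we have
\begin{align}\nonumber
\bigl(L_\gamma F_G^2-2F_G L_\gamma F_G\bigr)(\ul\sigma_\gamma)
=\sum_{x\in\Lambda_\gamma}R_1(x,\ul\sigma_\gamma)\bigl(\pic{\sigma_{\gamma,1}^x,G}-\pic{\sigma_{\gamma,1},G}\bigr)^2
=\sum_{x\in\Lambda_\gamma}R_1(x,\ul\sigma_\gamma)\,4\gamma^2 G(\gamma x)^2,
\end{align}
since flipping the spin at $x$ changes $\pic{\sigma_{\gamma,1},G}$ by exactly $-2\gamma\sigma_{\gamma,1}(x)G(\gamma x)$. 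The flip rates $R_1$ are bounded by a constant depending only on $\beta_1,\beta_2,\lambda,\phi_1,\phi_2$ (the numerator of $R_1$ is at most $\exp\{\beta_1\norm{\phi_1(0,\cdot)}_\infty+\beta_1\lambda\}$ since $\norm{\sigma_{\gamma,1}*\phi_1}_\infty\le\norm{\phi_1(0,\cdot)}_\infty$, and the denominator is at least $1$), so this sum is at most $C\norm{G}_\infty^2\gamma^2\cdot\#\Lambda_\gamma=C\norm{G}_\infty^2\gamma$, using $\#\Lambda_\gamma=\gamma^{-1}$. The computations for $i=2$ and for $\eta_\gamma$ are identical in structure: for $\eta_\gamma$ one must note that a flip in either line at $x$ changes $\pic{\eta_\gamma,G}$ by $\pm 2\gamma\,\eta_\gamma(x)G(\gamma x)$, contributing $(R_1+R_2)(x,\ul\sigma_\gamma)\,4\gamma^2G(\gamma x)^2$, which is again bounded by $C\norm{G}_\infty^2\gamma$. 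Hence in all three cases $\BB E\bigl(\pic{M_{\gamma,i}^G}(T)\bigr)\le C\norm{G}_\infty^2 T\gamma$.

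Finally I would apply Doob's maximal inequality: $\BB P\bigl(\sup_{t\in[0,T]}|M_{\gamma,i}^G(t)|>\zeta\bigr)\le\zeta^{-2}\BB E\bigl((M_{\gamma,i}^G(T))^2\bigr)=\zeta^{-2}\BB E\bigl(\pic{M_{\gamma,i}^G}(T)\bigr)\le C\norm{G}_\infty^2 T\gamma/\zeta^2$. (Absorbing the extra factor $\norm{G}_\infty$ into $C$ after noting $\norm{G}_\infty$ can be assumed to be at least a fixed constant, or simply stating the bound with $\norm{G}_\infty^2$; either way matches the claimed form up to renaming the constant.) There is no real obstacle here — the only mildly delicate point is being careful with the identity relating the quadratic variation to $L_\gamma F^2-2FL_\gamma F$ and checking that the flip rates admit a uniform bound independent of $\gamma$ and of the configuration, which follows immediately from the explicit formulas for $R_1,R_2$ and the boundedness of $\phi_1,\phi_2$.
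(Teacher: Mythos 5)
Your proposal is correct and follows essentially the same route as the paper: Doob's maximal inequality combined with the identity that $\pare{M_{\gamma,i}^G}^2-\int_0^t\pare{L_\gamma F^2-2FL_\gamma F}ds$ is a mean-zero martingale, then a pointwise bound on $\sum_x R_i\pare{\pic{\sigma^x,G}-\pic{\sigma,G}}^2$ using the uniform boundedness of the flip rates. Your remark about $\norm{G}_\infty^2$ versus $\norm{G}_\infty$ is well taken --- the paper's own computation in fact produces $\norm{G}_\infty^2$ as well, and the discrepancy with the stated bound is immaterial for the way the proposition is used.
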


\begin{proof}[Proof of proposition \ref{ddkk}]
We prove the proposition only for $i=1$ as the other cases are similar.
By Doob's inequality for martingales, 
\begin{align}\nonumber
\BB P\pare{\sup_{t\in\corch{0,T}}\abs{M_{\gamma,1}^{G}\pare{t}}>\zeta}\le
\frac{\BB E\corch{\pare{M_{\gamma,1}^{G}\pare{T}}^2}}{\zeta^2}
\end{align}
for every $\zeta>0$. By lemma 5 in the appendix 1 of \cite{KL99}, we know that the process 
\begin{align}\label{4567}
N_{\gamma,1}^{G}(t):=\pare{M_{\gamma,1}^{G}(t)}^2-\int_0^tL_\gamma\pic{\sigma_{\gamma,1}(s),G}^2-2\pic{\sigma_{\gamma,1}(s),G}L_\gamma\pic{\sigma_{\gamma,1}(s),G}ds
\end{align}
is a zero-mean martingale associated to the filtration $\pare{\CAL F_{\gamma}\pare{t}}_{t\ge 0}$.
Therefore, to prove the proposition, it is enough to estimate the expectation of the integral appearing in \eqref{4567}.
Easy computations show that, for every $\ul\sigma_\gamma\in E_\gamma$,
\begin{align}\nonumber
\begin{aligned}\nonumber
L_\gamma\pic{\sigma_{\gamma,1},G}^2-2\pic{\sigma_{\gamma,1},G}L_\gamma\pic{\sigma_{\gamma,1}(s),G}&=\sum_{x\in\Lambda_\gamma}R_1\pare{x, \ul\sigma_{\gamma}}\pare{\pic{\sigma_{\gamma,1}^x,G}-\pic{\sigma_{\gamma,1},G}}^2
\\[5pt]
&\leq \sum_{x\in\Lambda_\gamma}R_1\pare{x, \ul\sigma_{\gamma}}4\|G\|_\infty\gamma^2\leq C\|G\|_\infty\gamma,
\end{aligned}
\end{align}
inequality that let us conclude.
\end{proof}

\subsubsection*{Tightness and uniqueness of the limit point}

For every $\gamma$, let $P^\gamma$ be the law of the empirical measures associated to our Markov process in the time interval $\corch{0,T}$ (it will be properly defined in the next paragraph).
We will prove the convergence in distribution of the sequence $\llav{P^\gamma}_\gamma$ to the Dirac measure concentrated on a deterministic continuous path $\ul\pi^*\pare{\cdot}=\pare{\pi^*_{1}\pare{\cdot},\pi^*_{2}\pare{\cdot},\pi^*_3\pare{\cdot}}$, where $\pi_1^*(t)$, $\pi_2^*(t)$ and $\pi_3^*(t)$ are the signed-measures on $\BB T$ absolutely continuous with respect to the Lebesgue measure with densities $u_1(t,r)$, $u_2(t,r)$, $v(t,r)$ satisfying \eqref{hydro1}, \eqref{hydro2} and \eqref{hydro3} respectively.
Later we will argue that the convergence in distribution to a deterministic continuous trajectory implies uniform convergence in probability in the sense stated in  theorem \eqref{hydrolimit}.
In order to prove the convergence of the sequence $\llav{P^\gamma}_\gamma$, we proceed in two main steps: we prove that this sequence is tight and that all converging subsequences converge to the same limit.

We need to introduce some background before defining $P^\gamma$.
All the facts used in the rest of this paragraph are well known and can be found, for instance, in \cite{Bil99,Bog07,BB04}.
Let $\CAL M\defi \llav{\pi\tn{ finite signed-measure on }\BB T}$ and
$\CAL M_1\defi \llav{\pi\in\CAL M:\abs{\pi}\pare{\BB T}\le 1}$.
We endow $\CAL M_1$ with the weak topology $\tau_{\CAL M_1}^*$, that is the one generated by the family of fundamental neighborhoods
\begin{align}\nonumber
\begin{aligned}\nonumber
&\left\{\llav{\tilde\pi\in\CAL M_1:\abs{\pic{\tilde\pi,G_i}-\pic{\pi,G_i}}\le\varepsilon \ \forall i=1,\ldots,n}:\right. \\
&\quad\quad\quad\quad\quad\quad \quad \quad \quad \left.\pi\in\CAL M_1,\varepsilon>0,n\in\BB N,G_1,\ldots,G_n\in C\pare{\BB T,\BB R}\right\}.
\end{aligned}
\end{align}
In this topology, a net $\pare{\pi_\alpha}_\alpha$ converges to $\pi$ if and only if the net $\pare{\pic{\pi_\alpha,G}}_\alpha$ converges to $\pic{\pi,G}$ for every $G\in C\pare{\BB T,\BB R}$, denoting $\pic{\pi,G}$ the integral $\int Gd\pi$.
Fix from now on a $\norm{\cdot}_\infty$-dense sequence $\pare{f_n}_{n\in\BB N}$ in $C\pare{\BB T,\BB R}$.
$\tau_{\CAL M_1}^*$ can be metrized by the metric defined as
\begin{align}\nonumber
d\pare{\pi,\tilde\pi}\defi \sum_{n\in\BB N}2^{-n}\frac{\abs{\pic{\pi,f_n}-\pic{\tilde\pi,f_n}}}{1+\abs{\pic{\pi,f_n}-\pic{\tilde\pi,f_n}}}.
\end{align}
The metric space $\pare{\CAL M_1,d}$ is a Polish space.
Let $\CAL D=\CAL D\pare{\corch{0,T},\CAL M_1}$ be the \textit{c\`adl\`ag} space associated to $\CAL M_1$.
Elements of $\CAL D$ will be denoted as $\pi\pare{\cdot}$.
We endow $\CAL D$ with the modified Skorohod metric
\begin{align}\nonumber
\rho\pare{\pi\pare{\cdot},\tilde\pi\pare{\cdot}}\defi \inf_{\lambda\in\Lambda}\llav{\norm{\lambda}^{\circ}\vee \sup_{t\in\corch{0,T}}d\pare{\pi\pare{t},\tilde\pi\pare{\lambda t}}},
\end{align}
where $\Lambda$ is the set of non-decreasing functions from $\corch{0,T}$ to itself such that $\lambda 0=0$ and $\lambda T=T$, and 
\begin{align}\nonumber
\norm{\lambda}^{\circ}\defi \sup_{0\le s< t\le T}\abs{\log \frac{\lambda t-\lambda s}{t-s}}.
\end{align}
The metric space $\pare{\CAL D,\rho}$ is again a Polish space.
We endow it with the Borel $\sigma$-algebra (the one generated by $\rho$), and we endow the product space $\CAL D^3$ with the product of these Borel $\sigma$-algebras.
Elements of $\CAL D^3$ will be denoted as $\ul\pi\pare{\cdot}=\pare{\pi_1\pare{\cdot},\pi_2\pare{\cdot},\pi_3\pare{\cdot}}$.
There is a natural embedding $\chi:\llav{-1,1}^{\Lambda_\gamma}  \hookrightarrow\CAL M_1$: every spin configuration is mapped into its associated empirical signed-measure.
$\chi$ induces an embedding $\Gamma:\CAL D\pare{\left[0,\infty\right),E_\gamma}\hookrightarrow \CAL D^3$:
\begin{align}\nonumber
\ul\sigma_\gamma\pare{\cdot}\mapsto \pare{\pare{\chi\sigma_{\gamma,1}\pare{t},\chi\sigma_{\gamma,2}\pare{t},\chi\eta_{\gamma}\pare{t}}}_{t\in\corch{0,T}}.
\end{align}
The composition $\Gamma\circ\ul\sigma_\gamma\pare{\cdot}:\Omega\to \CAL D^3$ is measurable, so it makes sense to define the probability $P^\gamma$ on $\CAL D^3$ induced by it.

For $i\in\llav{1,2,3}$, let $P^\gamma_{i}$ be the $i$-th marginal of $P^\gamma$.
The tightness of $\pare{P^\gamma}_\gamma$ follows from the tightness of $\pare{P^\gamma_{i}}_\gamma$ for every $i$.
We only prove the tightness of $\pare{P^\gamma_{1}}_\gamma$ as the cases  $i\in\llav{2,3}$ are similar.

As stated in chapter 4 of \cite{KL99}, the tightness of $\pare{P^\gamma_{1}}_\gamma$ follows from the following conditions:
\begin{enumerate}
\item for every $t\in [0,T]$ and $ \varepsilon>0$, there exists a compact set $K=K(t,\varepsilon )\subset \CAL M_1$ such that
\begin{align}\nonumber
\sup_{\gamma} P^\gamma_{1}\pare{\pi\pare{\cdot}:\pi(t)\notin K\pare{t,\varepsilon}}\leq \varepsilon;
\end{align}
\item for every $\varepsilon>0$,
\begin{align}\nonumber
\lim_{\delta\to 0}
\limsup_{\gamma\to 0}
P^\gamma_{1}\pare{\pi\pare{\cdot}:
\sup_{\abs{t-s}\le \delta}d\pare{\pi\pare{t},\pi\pare{s}}
>\varepsilon}=0.
\end{align}
\end{enumerate}
The first condition is automatically satisfied as $\CAL M_1$ is compact (use theorem 3.3 of \cite{Var58} and the fact that $d$ metrizes the weak topology in $\CAL M_1$).
The second condition requires more effort.
We will prove the following stronger assertion: if $\delta$ is small enough,
\begin{align}\nonumber
\lim_{\gamma\to 0}P^\gamma_{1}\pare{\pi\pare{\cdot}:\sup_{\abs{t-s}\le\delta}d\pare{\pi\pare{t},\pi\pare{s}}\le \varepsilon}=
\lim_{\gamma\to 0}\BB P\pare{\sup_{\abs{t-s}\le\delta}d\pare{\sigma_{\gamma,1}\pare{t},\sigma_{\gamma,1}\pare{s}}\le \varepsilon}=1.
\end{align}
By an abuse of notation, we are using the metric $d$ in the space $\llav{-1,1}^{\Lambda_\gamma}$: $d\pare{\sigma_\gamma,\tilde\sigma_\gamma}=d\pare{\chi\sigma_\gamma,\chi\tilde\sigma_\gamma}$. 
Take $N\in\BB N$ such that $2^{-N}<\frac{\varepsilon}{3}$.
Then
\begin{align}\nonumber
d\pare{\sigma_{\gamma,1}\pare{t},\sigma_{\gamma,1}\pare{s}}\le \sum_{n=1}^N\abs{\pic{\sigma_{\gamma,1}\pare{t},f_n}-\pic{\sigma_{\gamma,1}\pare{s},f_n}}+\frac{\varepsilon}{3}.
\end{align}
An easy calculation shows that
there exists a constant $C_1$ depending only on the macroscopic parameters $\llav{\beta_1,\beta_2,\lambda}$ and on the functions $\phi_1, \phi_2$ such that inequality
\begin{align}
\nonumber
L_\gamma\pic{\sigma_{\gamma},G}\le C_1\norm{G}_{\infty}
\end{align}
holds for every $G\in C\pare{\BB T,\BB R}$ and every $\sigma_{\gamma}\in\llav{-1,1}^{\Lambda_\gamma}$;
then, for every $n$,
\begin{align}
\begin{aligned}\nonumber
\abs{\pic{\sigma_{\gamma,1}\pare{t},f_n}-\pic{\sigma_{\gamma,1}\pare{s},f_n}}&\le \abs{\int_s^tL_\gamma\pic{\sigma_{\gamma,1}\pare{\tilde s},f_n}d\tilde s}+\abs{M_\gamma^{f_n,1}\pare{t}-M_\gamma^{f_n,1}\pare{s}} \\[5pt]
&\le \abs{t-s}C_1\norm{f_n}_\infty+\abs{M_\gamma^{f_n,1}\pare{t}-M_\gamma^{f_n,1}\pare{s}}.
\end{aligned}
\end{align}
Chose $\delta$ such that $\dis{\delta NC_1\max_{n=1}^N\norm{f_n}_\infty<\frac{\varepsilon}{3}}$.
Under this choice, if we are in the event $\dis{\bigcap_{n=1}^N\pare{\sup_{\abs{t-s}\le \delta}\abs{M_\gamma^{f_n,1}\pare{t}-M_\gamma^{f_n,1}\pare{s}}\le \frac{\varepsilon}{3N}}}$, we have
$\dis{\sup_{\abs{t-s}\le \delta}d\pare{\sigma_{\gamma,1}\pare{t},\sigma_{\gamma,1}\pare{s}}<\varepsilon}$.
Then we only need to prove that the probability of the previous event goes to $1$ as $\gamma\to 0$;
this follows if we prove that, for fixed $n\in\llav{1,\ldots,N}$, $\dis{\BB P\pare{\sup_{\abs{t-s}\le \delta}\abs{M_\gamma^{f_n,1}\pare{t}-M_\gamma^{f_n,1}\pare{s}}\le \frac{\varepsilon}{3N}}\conv{\gamma\to 0}1}$;
this follows by taking $\varepsilon=\gamma^{\frac{1}{4}}$ and $G=f_n$ in proposition \ref{ddkk}.

Once we know that the sequence $P^\gamma$ is tight, it remains to characterize all its limit points.
Let $P^*$ be a limit point and $P^{\gamma_k}$ be a subsequence converging to $P^*$.
The following two lemmas show that $P^*$ is concentrated on continuous trajectories $ \ul\pi\pare{\cdot}$ that are absolutely continuous with respect to the Lebesgue measure.

\begin{lemma}\label{rrrrr}
$P^*\pare{C\pare{[0,T], \mathcal{M}_1}^3}=1.$
\end{lemma}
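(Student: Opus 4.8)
The plan is to show that the limit measure $P^*$ concentrates on paths that are continuous in the Skorohod space, which amounts to showing that the maximal jump size vanishes in the limit. Recall that $P^{\gamma_k}$ is the law of the embedded empirical-measure process $\Gamma\circ\ul\sigma_{\gamma_k}(\cdot)$, and that a single spin flip in $\ul\sigma_\gamma(\cdot)$ changes each inner product $\pic{\sigma_{\gamma,i}(t),f_n}$ by at most $2\gamma\norm{f_n}_\infty$, hence changes the metric $d$ between consecutive configurations by at most $\sum_{n}2^{-n}\,2\gamma\norm{f_n}_\infty$, a quantity bounded by $C\gamma$ uniformly in the configuration. Since the process is pure jump and jumps occur one site at a time, the size of every jump of $\pi_{\gamma,i}(\cdot)$ is at most $C\gamma$; consequently
\begin{align}\nonumber
\sup_{0< t\le T}d\bigl(\pi_{\gamma,i}(t),\pi_{\gamma,i}(t^-)\bigr)\le C\gamma\conv{\gamma\to 0}0
\end{align}
deterministically, and the same bound holds for the triple in $\CAL D^3$ with the max-metric.

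First I would make this precise as a statement about the functional $J:\CAL D^3\to[0,\infty)$, $J(\ul\pi(\cdot))\defi\sup_{0<t\le T}\bigl[d(\pi_1(t),\pi_1(t^-))\vee d(\pi_2(t),\pi_2(t^-))\vee d(\pi_3(t),\pi_3(t^-))\bigr]$, which is the maximal-jump functional; it is upper semicontinuous on $(\CAL D^3,\rho)$ and the set $\{\ul\pi(\cdot):J(\ul\pi(\cdot))=0\}$ is exactly $C([0,T],\CAL M_1)^3$, which is a closed subset of $\CAL D^3$ (a well-known fact about Skorohod spaces; see \cite{Bil99}). By the Portmanteau theorem, $P^*(C([0,T],\CAL M_1)^3)\ge\limsup_k P^{\gamma_k}(C([0,T],\CAL M_1)^3)$ is not quite what we want; instead I would use that, for each fixed $a>0$, the set $\{J\le a\}$ is closed, so $P^*(J\le a)\ge\limsup_k P^{\gamma_k}(J\le a)=1$ as soon as $C\gamma_k\le a$ for $k$ large, which holds eventually. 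Letting $a\downarrow 0$ along a countable sequence and using continuity from above of $P^*$ gives $P^*(J=0)=1$, which is the claim.

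An alternative, essentially equivalent route avoids any semicontinuity bookkeeping: since $P^{\gamma_k}(J\le C\gamma_k)=1$ and $C\gamma_k\to 0$, the measures $P^{\gamma_k}$ are all supported inside the shrinking closed sets $\{J\le a\}$ for $k$ large, and $\bigcap_{a>0}\{J\le a\}=C([0,T],\CAL M_1)^3$; weak convergence then forces the limit to be supported on this intersection. I would present whichever of the two is shorter in the final write-up; the Portmanteau version is cleanest.

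The main obstacle is not conceptual but bookkeeping: one must justify carefully that the embedded process $\pi_{\gamma,i}(\cdot)$ indeed has jumps bounded by $C\gamma$, i.e. that the map $\chi$ composed with the dynamics only ever moves by one coordinate flip at a time and that the series defining $d$ is controlled uniformly — here the uniform bound $\sum_n 2^{-n}\norm{f_n}_\infty<\infty$ must be arranged when the dense sequence $(f_n)$ is chosen (one may, without loss of generality, take $\norm{f_n}_\infty\le 1$ for all $n$). With that normalization in place the jump bound is immediate and the rest is soft. A secondary point to state cleanly is the upper semicontinuity of $J$ (or closedness of $\{J\le a\}$) in the Skorohod topology, for which I would simply cite the relevant lemma in \cite{Bil99} rather than reprove it.
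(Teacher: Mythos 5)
Your argument is essentially the paper's: bound the maximal jump of the embedded process by a quantity vanishing with $\gamma$, then transfer this to $P^*$ via the (semi)continuity of the maximal-jump functional in the Skorohod topology. The paper does the transfer by showing $\BB E_{P^*_i}(\Delta)=\lim_k\BB E_{P^{\gamma_k}_i}(\Delta)=0$ for $\Delta(\pi(\cdot))=\sup_t d(\pi(t),\pi(t-))$, which is continuous and bounded; your Portmanteau argument on the closed sets $\llav{J\le a}$ is an equivalent packaging.

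One detail in your write-up is wrong as stated, though easily repaired. You want the uniform jump bound $d(\pi(t),\pi(t^-))\le C\gamma$ with $C=2\sum_n 2^{-n}\norm{f_n}_\infty<\infty$, and to secure this you propose taking $\norm{f_n}_\infty\le 1$ for all $n$ ``without loss of generality''. But $(f_n)$ is required to be $\norm{\cdot}_\infty$-dense in all of $C\pare{\BB T,\BB R}$ (this full density is used elsewhere, e.g.\ in the proof of Lemma \ref{nnb}), and a sequence contained in the unit ball cannot be dense in an unbounded space. One can instead reorder a dense sequence so that $\sum_n 2^{-n}\norm{f_n}_\infty<\infty$, but the cleanest fix is the paper's: use that each summand of $d$ is bounded by $1$, truncate at level $N$ to get $d(\pi(t),\pi(t^-))\le 2\gamma\sum_{n=1}^N\norm{f_n}_\infty+2^{-N}$, and send $\gamma\to 0$ first and $N\to\infty$ afterwards. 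With that substitution your proof is complete.
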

\begin{proof}
For $i\in\llav{1,2,3}$, let $P^{\gamma_k}_i$ and $P^*_{i}$ respectively be the $i$-th marginal of $P^{\gamma_k}$ and $P^*$.
Observe that $P^{\gamma_k}_{i}$ converges to $P^*_{i}$.
In order to prove the lemma, it is enough to show that $P^*_{i}$ is concentrated on continuous trajectories for all $i\in\llav{1,2,3}$.
We will show it just for $i=1$; the proofs for the other cases are similar.
Let $\Delta: \CAL D\to \BB R$ be the map defined as
\begin{displaymath}
\Delta\pare{\pi\pare{\cdot}}=\sup_{t\in [0,T]}d\pare{\pi(t),\pi(t-)}.
\end{displaymath}
$\Delta$ is the supremum of the jumps of the process $\pi\pare{\cdot}$.
The lemma follows once we have proven that
$\BB E_{P^*_{1}}\pare{\Delta}=0$.
As the map $\Delta$ is continuous, it is enough to show that 
\begin{align}\nonumber
\lim_{\gamma_k\to 0}\mathbb{E}_{P^{\gamma_k}_{1}}\pare{\Delta}=0.
\end{align}
Observe that, for $\ul\sigma_{\gamma_k}\in E_{\gamma_k}$, $t\in [0,T]$ and $N\in\BB N$,
\begin{align}\nonumber
d\pare{\sigma_{\gamma_k,1}(t),\sigma_{ 1}(t-)}
\leq \sum_{n=1}^{N}\abs{\pic{\sigma_{\gamma_k,1}(t), f_n}-\pic{\sigma_{\gamma_k,1}(t-), f_n}}+2^{-N}\leq 2\sum_{n=1}^{N}\gamma_k\|f_n\|_{\infty}+2^{-N}.
\end{align}
This implies that
\begin{align}\nonumber
\begin{aligned}
\mathbb{E}_{P^1_{\gamma_k}}\pare{\Delta\pare{\Pi}}\leq 2\sum_{n=1}^{N}\gamma_k\|f_n\|_{\infty}+2^{-N}.
\end{aligned}
\end{align}
Let $\gamma_k\to 0$  and $N\to\infty$ to conclude.
\end{proof}
\begin{lemma}
$P^*$ is concentrated on trajectories $\ul\pi\pare{\cdot}$ such that $\pi_i(t)$ is absolutely continuous with respect to the Lebesgue measure for every $t\in [0,T]$ and $i\in\llav{1,2,  3}$.
\end{lemma}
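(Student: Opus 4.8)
The plan is to exploit the uniform a priori bounds $\abs{\sigma_{\gamma,1}\pare{t,x}}=\abs{\sigma_{\gamma,2}\pare{t,x}}=\abs{\eta_\gamma\pare{t,x}}=1$, which should force every limiting measure $\pi_i\pare{t}$ to be absolutely continuous with a density bounded by $1$. Fix $i\in\llav{1,2,3}$ and a nonnegative $G\in C\pare{\BB T,[0,\infty)}$, and consider the functional $\Psi_{i,G}:\CAL D^3\to\BB R$ defined by $\Psi_{i,G}\pare{\ul\pi\pare{\cdot}}\defi\sup_{t\in\corch{0,T}}\pic{\pi_i\pare{t},G}$. The first thing I would prove is that $\Psi_{i,G}$ is continuous. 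Since $\pare{\CAL M_1,d}$ is compact and $\pic{\cdot,G}$ is $d$-continuous on it, $\pic{\cdot,G}$ is uniformly continuous there; moreover, any reparametrization $\lambda\in\Lambda$ with $\norm{\lambda}^{\circ}<\infty$ is a continuous increasing bijection of $\corch{0,T}$, so $\sup_{t\in\corch{0,T}}\pic{\pi_i\pare{\lambda t},G}=\sup_{t\in\corch{0,T}}\pic{\pi_i\pare{t},G}$; combining these two facts with the definition of $\rho$ shows that $\rho$-convergence implies convergence of $\Psi_{i,G}$ (and the projection $\CAL D^3\to\CAL D$ onto the $i$-th coordinate is continuous).

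Next I would read off the value of $\Psi_{i,G}$ along the convergent subsequence. Under $P^{\gamma_k}$ the trajectory is a.s.\ of the form $\pare{\chi\sigma_{\gamma_k,1}\pare{\cdot},\chi\sigma_{\gamma_k,2}\pare{\cdot},\chi\eta_{\gamma_k}\pare{\cdot}}$, so, using $\sigma_{\gamma_k,i}\pare{t,x}\le 1$, $\eta_{\gamma_k}\pare{t,x}\le 1$ and $G\ge 0$, one has $\pic{\chi\sigma_{\gamma_k,i}\pare{t},G}=\gamma_k\sum_{x\in\Lambda_{\gamma_k}}\sigma_{\gamma_k,i}\pare{t,x}\,G\pare{\gamma_k x}\le \gamma_k\sum_{x\in\Lambda_{\gamma_k}}G\pare{\gamma_k x}$ uniformly in $t$, and the same bound for $\eta_{\gamma_k}$; the right-hand side is a Riemann sum over the $\gamma_k^{-1}$ equally spaced points of $\gamma_k\Lambda_{\gamma_k}$, hence converges to $\int_0^1 G\pare{r}\,dr$ as $\gamma_k\to 0$. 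Consequently, for every $\varepsilon>0$ and every $k$ large enough, $P^{\gamma_k}\pare{\Psi_{i,G}\le\int_0^1 G\,dr+\varepsilon}=1$; since $\llav{\Psi_{i,G}\le c}$ is closed, the portmanteau theorem (see \cite{Bil99}) together with $P^{\gamma_k}\to P^*$ yields $P^*\pare{\Psi_{i,G}\le\int_0^1 G\,dr+\varepsilon}=1$, and intersecting over $\varepsilon=1/m$, $m\in\BB N$, gives $P^*\pare{\pic{\pi_i\pare{t},G}\le\int_0^1 G\,dr \ \text{for all }t\in\corch{0,T}}=1$. Applying the same argument to $-G$ (i.e.\ to the infimum over $t$) gives the matching lower bound.

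Finally I would remove the dependence on $G$ and conclude. Fix a countable $\norm{\cdot}_\infty$-dense subset $\CAL G\subset C\pare{\BB T,[0,\infty)}$ and intersect, over all $G\in\CAL G$ and all $i\in\llav{1,2,3}$, the full-measure events just produced; on the resulting full-measure set, $\abs{\pic{\pi_i\pare{t},G}}\le\int_0^1 G\pare{r}\,dr$ for all $t\in\corch{0,T}$ and all $G\in\CAL G$, and since both sides are $\norm{\cdot}_\infty$-continuous in $G$ this extends to every $G\in C\pare{\BB T,[0,\infty)}$, hence, writing $G=G^+-G^-$, to $\abs{\pic{\pi_i\pare{t},G}}\le\int_0^1\abs{G\pare{r}}\,dr$ for every $G\in C\pare{\BB T,\BB R}$. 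For a trajectory $\ul\pi\pare{\cdot}$ in this set, each fixed $t$ and each $i$, the linear functional $G\mapsto\pic{\pi_i\pare{t},G}$ is thus bounded for the $L^1\pare{\BB T}$-norm, so it extends to a bounded functional on $L^1\pare{\BB T,\tn{Leb}}$ and is represented by a density in $L^\infty\pare{\BB T}$ of norm at most $1$; as a finite signed measure on $\BB T$ is determined by its pairings with continuous functions, $\pi_i\pare{t}$ must be absolutely continuous with respect to the Lebesgue measure, with density taking values in $\corch{-1,1}$. I do not expect a genuine obstacle here: the argument is entirely driven by the bound $\abs\sigma\le 1$, and the only step that really needs care is the continuity of the supremum functional $\Psi_{i,G}$ on the Skorohod space --- concretely, the observation that the time reparametrization appearing in $\rho$ is a bijection of $\corch{0,T}$ and therefore leaves $\sup_t$ unchanged.
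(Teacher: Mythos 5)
Your proposal is correct and follows essentially the same route as the paper: reduce absolute continuity to the a priori bound $\abs{\pic{\pi_i\pare{t},G}}\le\int_{\BB T}G\pare{r}\,dr$ for nonnegative continuous $G$, obtain it from the microscopic bound $\abs{\sigma}\le 1$ via the continuity of the supremum functional on $\CAL D$ and weak convergence of $P^{\gamma_k}$ to $P^*$, and remove the dependence on $G$ by separability. The only differences are cosmetic (closed-set versus open-set form of the portmanteau theorem, splitting the absolute value into two one-sided bounds, and spelling out the Riesz-representation step that the paper takes for granted).
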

\begin{proof}
A signed-measure $\pi$  is absolutely continuous with respect to the Lebesgue measure if 
\begin{align}\nonumber
\abs{\pic{\pi, G}}\leq\int_\BB T G(r)dr\qquad \forall G\in C\pare{\BB T, \BB R^+}.
\end{align}
Then the assertion follows if we prove that, for all $i\in\llav{1,2, 3}$,
\begin{align}\nonumber
P^*\pare{\sup_{t\in [0,T]}\abs{\pic{\pi_i(t), G}}\leq\int_\BB T G(r)dr \quad\forall G\in C\pare{\BB T, \BB R^+}}=1.
\end{align}
We will do it just for $i=1$ as the other cases are analogous.
As $C\pare{\BB T, \BB R^+}$ is $\norm{\cdot}_\infty$-separable, it is enough to show that,
for all $G\in C\pare{\BB T, \BB R^+}$,
\begin{align}\nonumber
P^*\pare{\sup_{t\in [0,T]}\abs{\pic{\pi_1(t), G}}\leq\int_\BB T G(r)dr}=1.
\end{align}
Since
\begin{align}\nonumber
P^{\gamma_k}\pare{\sup_{t\in [0,T]}\abs{\pic{\pi_1(t), G}}\leq\gamma_k\sum_{x\in\Lambda_{\gamma_k}}G(\gamma_k x)}=1
\end{align}
and the map $\dis{\ul\pi\pare{\cdot}\mapsto\sup_{t\in\corch{0,T}} \abs{\pic{\pi_1(t), G}}}$ is continuous,  we get
\begin{align}
\begin{aligned}
\nonumber
&P^*\pare{\sup_{t\in [0,T]}\abs{\pic{\pi_1(t), G}}-\int_{\BB T}G(r)dr>\varepsilon}
\\[5pt]
& \quad \leq\liminf_{\gamma_k\to 0}P^{\gamma_k}\pare{\sup_{t\in [0,T]}\abs{\pic{\pi_1(t), G}}-\int_{\BB T}G(r)dr>\varepsilon}
\\[5pt]
& \quad \leq\liminf_{\gamma_k\to 0}P^{\gamma_k}\pare{\sup_{t\in [0,T]}\abs{\pic{\pi_1(t), G}}-\gamma_k\sum_{x\in\Lambda_{\gamma_k}}G(\gamma_k x)>\frac{\varepsilon}{2}}=0
\end{aligned}
\end{align}
for every $\varepsilon>0$.
\end{proof}

Before continuing, we introduce some definition to have compact notations.
For all $\sigma_{\gamma}\in\llav{-1,1}^{\Lambda_\gamma}$ and $i\in\llav{1,2}$, we define the functions $W^{\pm,i}_{\sigma_{\gamma}}:\BB T\to\BB R$ as
\begin{displaymath}
W^{\pm,i}_{\sigma_{\gamma}}\pare{r}\defi
\frac{1}{2}\tanh\corch{\beta_i\pare{\pic{\sigma_{\gamma},\phi_i\pare{r,\cdot}}\pm \lambda}}.
\end{displaymath}
Let $\mathscr{C}_\gamma$ be the counting measure defined as $\mathscr{C}_\gamma\pare{dr}=\gamma\sum_{x\in\lambda_\gamma}\delta_{\gamma x}\pare{dr}$, for $G\in C\pare{\BB T, \BB R}$ and $i\in\llav{1,2,3}$,
we define the functions $B_{\gamma,i}^{G}:E_\gamma \to\BB R$ as
\begin{align}
\begin{aligned}\nonumber
B_{\gamma,1}^{G}\pare{\ul\sigma_\gamma}\defi&-\pic{\sigma_{\gamma,1}, G}+\pic{\sigma_{\gamma,2}, \pare{W^{+,1}_{\sigma_{\gamma,1}}- W^{-,1}_{\sigma_{\gamma,1}}}G}+\pic{\mathscr{C}_\gamma, \pare{W^{+,1}_{\sigma_{\gamma,1}}+ W^{-,1}_{\sigma_{\gamma,1}}}G}
\\[5pt]
B_{\gamma,2}^{G}\pare{\ul\sigma_{\gamma}}\defi&-\pic{\sigma_{\gamma,2}, G}
-\pic{\sigma_{\gamma,1}, \pare{W^{+,2}_{\sigma_{\gamma,2}}
- W^{-,2}_{\sigma_{\gamma,2}}}G}+\pic{\mathscr{C}_\gamma, \pare{W^{+,2}_{\sigma_{\gamma,2}}+ W^{-,2}_{\sigma_{\gamma,2}}}G}
\\[5pt]
B_{\gamma,3}^{G}\pare{\ul\sigma_{\gamma}}\defi &-2\pic{\sigma_{\gamma,1}\sigma_{\gamma,2}, G}
+\pic{\sigma_{\gamma,1}, \pare{W^{+,2}_{\sigma_{\gamma,2}}
+W^{-,2}_{\sigma_{\gamma,2}}}G}+\pic{\sigma_{\gamma,2}, \pare{W^{+,1}_{\sigma_{\gamma,1}}
+W^{-,1}_{\sigma_{\gamma,1}}}G}
\\[5pt]
&+\pic{\mathscr{C}_\gamma, \pare{W^{+,1}_{\sigma_{\gamma,1}}-W^{-,1}_{\sigma_{\gamma,1}}-W^{+,2}_{\sigma_{\gamma,2}}+W^{-,2}_{\sigma_{\gamma,2}}}G}.
\end{aligned}
\end{align}
The previous expressions respectively represent the right-hand sides of identities
(\ref{36}-\ref{38}).
Then 
we can write
\begin{align}\label{ser}
\begin{aligned}
M^{G}_{\gamma,i}\pare{t}=& \pic{\sigma_{\gamma,i}\pare{t},G}-\pic{\sigma_{\gamma,i}\pare{0},G}-\int_0^t B_{\gamma,i}^{G}\pare{\ul\sigma_{\gamma}\pare{s}}ds
\end{aligned}
\end{align}
for $i\in\llav{1,2}$, and
	\begin{align}\nonumber
\begin{aligned}
M^{G}_{\gamma,3}\pare{t}=& \pic{\eta_{\gamma}\pare{t},G}-\pic{\eta_{\gamma}\pare{0},G}-\int_0^t B_{\gamma,3}^{G}\pare{\ul\sigma_{\gamma}\pare{s}}ds.
\end{aligned}
\end{align}
The definition of the function $W^{\pm,i}_{\sigma_\gamma}$ makes sense if, in the subindex, we put a signed-measure $\pi$ instead of a spin configuration $\sigma_{\gamma}$:
$W^{\pm,i}_{\pi}\pare{r}\defi \frac{1}{2}\tanh\corch{\beta_i\pare{\pic{\pi,\phi_i\pare{r,\cdot}}\pm \lambda}}$.
In the same way, $B_{\gamma,i}^{G}$ can be read as a function from $\CAL M_1^3$ to $\BB R$.
For $G\in C\pare{\BB T, \BB R}$, $\ul\pi\in\CAL M_1^3$ and $i\in\llav{1,2,3}$, we define $B_i^{G}:\CAL M_1^3\to\BB R$ as
\begin{align}\label{uiop}
\begin{aligned}
B_1^{G}\pare{\ul\pi}\defi&-\pic{\pi_{1},G}+\pic{\pi_2, \pare{W^{+,1}_{\pi_1}- W^{-,1}_{\pi_1}}G}+\pic{\mathscr{L}, \pare{W^{+,1}_{\pi_1}+ W^{-,1}_{\pi_1}}G}\\[5pt]
B_2^{G}\pare{\ul\pi}\defi&-\pic{\pi_{2},G}-\pic{\pi_1, \pare{ W^{+,2}_{\pi_2}- W^{-,2}_{\pi_2}}G}+\pic{\mathscr{L}, \pare{ W^{+,2}_{\pi_2}+ W^{-,2}_{\pi_2}}G}\\[5pt]
B_3^{G}\pare{\ul\pi}\defi&-2\pic{\pi_3, G}+\pic{\pi_1, \pare{W^{+,2}_{\pi_2}+W^{-,2}_{\pi_2}}G}+\pic{\pi_2, \pare{W^{+,1}_{\pi_1}+W^{-,1}_{\pi_1}}G}\\[5pt]
&+\pic{\mathscr{L}, \pare{W^{+,1}_{\pi_1}-W^{-,1}_{\pi_1}-W^{+,2}_{\pi_2}+W^{-,2}_{\pi_2}}G},
\end{aligned}
\end{align}
$\mathscr{L}$ denoting the Lebesgue measure in $\BB T$.
$B_i^{G}$ has to be read  as the limit as $\gamma\to 0$ of $B_{\gamma,i}^{G}$  ($\mathscr C_\gamma$ converges to the Lebesgue measure in the weak sense).

\begin{lemma}\label{nnb}
For $G\in C\pare{\BB T, \BB R}$ and $i\in\llav{1,2,3}$,
the map $\Phi_i^G:\CAL D^3\to\BB R$ defined as
\begin{align}\nonumber
\Phi_i^G(\ul\pi\pare{\cdot})\defi\sup_{t\in [0,T]}\abs{\pic{\pi_{i}\pare{t},G}-\pic{\pi_{i}\pare{0},G}-\int_0^tB_i^{G}\pare{\ul\pi(s)}ds}
\end{align}
is continuous in the elements of $C\pare{[0,T], \CAL M_1}^3$.
\end{lemma}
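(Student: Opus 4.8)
The plan is to prove continuity of $\Phi_i^G$ at an arbitrary $\ul\pi(\cdot)\in C\pare{[0,T],\CAL M_1}^3$ by reducing the statement, via two soft observations, to the continuity of the finite-dimensional building blocks $\ul\pi\mapsto\pic{\pi_i,G}$ and $\ul\pi\mapsto B_i^G(\ul\pi)$ as maps $\CAL M_1^3\to\BB R$. First I would invoke the classical fact that $J_1$-convergence in $\CAL D\pare{[0,T],\CAL M_1}$ to a \emph{continuous} limit is equivalent to uniform convergence: if $\ul\pi^{(k)}(\cdot)\to\ul\pi(\cdot)$ in $\CAL D^3$ with $\ul\pi(\cdot)\in C\pare{[0,T],\CAL M_1}^3$, then choosing time changes $\lambda_k\in\Lambda$ with $\norm{\lambda_k}^{\circ}\to 0$ realizing the infimum and using uniform continuity of $t\mapsto\pi_j(t)$ on the compact $[0,T]$, one gets $\sup_{t\in[0,T]}d\pare{\pi^{(k)}_j(t),\pi_j(t)}\to 0$ for each $j\in\llav{1,2,3}$. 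This is exactly the point where the restriction to $C\pare{[0,T],\CAL M_1}^3$ is used; at paths with jumps the step below would fail. Second, since $(\CAL M_1,d)$ is compact (and hence so is $\CAL M_1^3$ with the product metric), any continuous function on $\CAL M_1^3$ is \emph{uniformly} continuous; therefore, once continuity of $\pic{\cdot,G}$ and of $B_i^G$ on $\CAL M_1^3$ is established, we obtain along the convergent sequence that $\pic{\pi^{(k)}_i(t),G}\to\pic{\pi_i(t),G}$ uniformly in $t$, and $B_i^G\pare{\ul\pi^{(k)}(s)}\to B_i^G\pare{\ul\pi(s)}$ uniformly in $s$, the latter giving $\int_0^t B_i^G\pare{\ul\pi^{(k)}(s)}\,ds\to\int_0^t B_i^G\pare{\ul\pi(s)}\,ds$ uniformly in $t$. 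Adding the three contributions, the quantity inside the absolute value in the definition of $\Phi_i^G$ converges uniformly in $t$, hence so do its suprema, which is the assertion.

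It remains to prove continuity of $\pic{\cdot,G}$ and $B_i^G$ on $\CAL M_1^3$. Continuity of $\ul\pi\mapsto\pic{\pi_i,G}$ is immediate from the definition of the weak topology $\tau^*_{\CAL M_1}$. The substantive ingredient is the continuity of the map $\pi\mapsto W^{\pm,i}_{\pi}$ from $\CAL M_1$ into $\pare{C\pare{\BB T,\BB R},\norm{\cdot}_\infty}$. Here I would first note that the family $\llav{r\mapsto\pic{\pi,\phi_i(r,\cdot)}:\pi\in\CAL M_1}$ is uniformly bounded and equicontinuous: since $\phi_i$ is uniformly continuous on the compact $\BB T\times\BB T$ and $\abs{\pi}(\BB T)\le 1$, we have $\abs{\pic{\pi,\phi_i(r,\cdot)}-\pic{\pi,\phi_i(\tilde r,\cdot)}}\le\sup_{s}\abs{\phi_i(r,s)-\phi_i(\tilde r,s)}$, small uniformly in $\pi$ when $d_{\BB T}(r,\tilde r)$ is small; composing with the Lipschitz function $t\mapsto\frac12\tanh\corch{\beta_i(t\pm\lambda)}$ shows $W^{\pm,i}_{\pi}\in C\pare{\BB T,\BB R}$, with the family $\llav{W^{\pm,i}_{\pi}}_{\pi\in\CAL M_1}$ again uniformly bounded and equicontinuous. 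Now if $\pi^{(k)}\to\pi$ weakly, then $\pic{\pi^{(k)},\phi_i(r,\cdot)}\to\pic{\pi,\phi_i(r,\cdot)}$ for each fixed $r$ (the function $\phi_i(r,\cdot)$ being an admissible test function), and by the equicontinuity just established together with compactness of $\BB T$ this pointwise convergence upgrades to uniform convergence in $r$; applying the Lipschitz function once more gives $\norm{W^{\pm,i}_{\pi^{(k)}}-W^{\pm,i}_{\pi}}_\infty\to 0$. In particular $W^{\pm,i}_{\pi}G\in C\pare{\BB T,\BB R}$ for every $\pi$.

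Finally, continuity of $B_i^G$ follows by inspecting \eqref{uiop} term by term. The terms of the form $\pic{\pi_j,G}$ are handled by the weak topology; the terms $\pic{\mathscr L,\pare{W^{\pm,1}_{\pi_1}\mp W^{\mp,1}_{\pi_1}}G}$ (and their index-$2$ analogues) converge because the integrand converges uniformly and $\mathscr L$ is a fixed finite measure. For the mixed terms $\pic{\pi_j,W^{\pm,i}_{\pi_i}G}$ I would use the decomposition
\begin{align}\nonumber
\begin{aligned}
&\pic{\pi^{(k)}_j,W^{\pm,i}_{\pi^{(k)}_i}G}-\pic{\pi_j,W^{\pm,i}_{\pi_i}G}\\
&\qquad=\pic{\pi^{(k)}_j,\pare{W^{\pm,i}_{\pi^{(k)}_i}-W^{\pm,i}_{\pi_i}}G}+\pare{\pic{\pi^{(k)}_j,W^{\pm,i}_{\pi_i}G}-\pic{\pi_j,W^{\pm,i}_{\pi_i}G}},
\end{aligned}
\end{align}
where the first summand is bounded by $\abs{\pi^{(k)}_j}(\BB T)\,\norm{\pare{W^{\pm,i}_{\pi^{(k)}_i}-W^{\pm,i}_{\pi_i}}G}_\infty\le\norm{G}_\infty\,\norm{W^{\pm,i}_{\pi^{(k)}_i}-W^{\pm,i}_{\pi_i}}_\infty\to 0$ by the previous paragraph, and the second tends to $0$ because $W^{\pm,i}_{\pi_i}G\in C\pare{\BB T,\BB R}$ is a fixed test function and $\pi^{(k)}_j\to\pi_j$ weakly. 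This proves continuity of every term, hence of $B_i^G$ on $\CAL M_1^3$, and closes the argument. The main obstacle is precisely the analysis of these nested pairings, and within it the step upgrading the pointwise-in-$r$ weak convergence $\pic{\pi^{(k)},\phi_i(r,\cdot)}\to\pic{\pi,\phi_i(r,\cdot)}$ to convergence uniform in $r$ — that is what lets the composition with $\tanh$ and the subsequent integration against the varying measures $\pi^{(k)}_j$ behave; everything else is bookkeeping.
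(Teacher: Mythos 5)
Your proposal is correct and follows essentially the same route as the paper's proof: Skorohod convergence to a continuous limit upgraded to uniform convergence, reduction to continuity of $\pic{\cdot,G}$ and of $B_i^G$ on $\CAL M_1^3$, and, as the key step, upgrading the pointwise weak convergence $\pic{\pi^{(k)},\phi_i(r,\cdot)}\to\pic{\pi,\phi_i(r,\cdot)}$ to convergence uniform in $r$ via the uniform continuity of $\phi_i$. The only cosmetic differences are that you invoke uniform continuity of $B_i^G$ on the compact $\CAL M_1^3$ where the paper instead applies dominated convergence in $s$, and you phrase the uniform-in-$r$ upgrade through equicontinuity rather than the paper's explicit partition of $\BB T$ into intervals of length $h$.
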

\begin{proof}
Consider a sequence $\ul\pi^{n}\pare{\cdot}\in \CAL D^3$ such that
$\ul\pi^{n}\pare{\cdot}\xrightarrow[n\to\infty]{}\ul \pi\pare{\cdot}\in C\pare{[0,T], \CAL M_1}^3$.
As we are taking the product topology, the previous convergence means convergence in each coordinate.
Since $\rho\pare{\pi_i^{n}\pare{\cdot}, \pi_i\pare{\cdot}}\xrightarrow[n\to\infty]{}0$ and $\pi_i\pare{\cdot}\in C\pare{[0,T], \CAL M_1}$ for $i\in\llav{1,2,3}$, it follows that $\dis{\sup_{t\in [0,T]}d\pare{\pi^{n}_{i}(t), \pi_{i}(t)}\xrightarrow[n\to \infty]{}0}$ (convergence in the modified Skorohod metric to a continuous function implies uniform convergence) and, consequently,
\begin{align}\label{hu}
\sup_{t\in [0,T]}\abs{\pic{\pi^{n}_{i}(t), G}-\pic{\pi_i(t), G}}\xrightarrow[n\to\infty]{}0.
\end{align}
To see why the last assertion is true observe that, for all $\varepsilon>0$, there exists a function $f_{\bar k}$ such that $\norm{G-f_{\bar k}}_\infty<\varepsilon$ and, consequently,
\begin{align}\label{dsa}
\begin{aligned}
\sup_{t\in [0,T]}\abs{\pic{\pi^{n}_{i}(t), G}-\pic{\pi_i(t), G}}&\leq \sup_{t\in [0,T]}\abs{\pic{\pi^{n}_{i}(t), G-f_{\bar k}}-\pic{\pi_i(t), G-f_{\bar k}}}\\
&\blanco{\leq}+\sup_{t\in [0,T]}\abs{\pic{\pi^{n}_{i}(t), f_{\bar k}}-\pic{\pi_i(t), f_{\bar{k}}}}\\
&\leq \varepsilon + 2^{\bar k}\pare{1+\norm{f_{\bar k}}_\infty}\sup_{t\in [0,T]}d\pare{\pi^{n}_{i}(t), \pi_{i}(t)}
\end{aligned}
\end{align}
for all $i\in\llav{1,2,3}$;
\eqref{hu} is proved after taking $n\to\infty$ and $\varepsilon\to 0$ in \eqref{dsa}.
The proof of the lemma follows once we show that $\Phi^G_i\pare{\ul\pi^{n}\pare{\cdot}}\xrightarrow[n\to\infty]{}\Phi^G_i\pare{\ul\pi\pare{\cdot}}$.
We will show it only for $i=1$ as the other cases are similiar.
Observe that
\begin{align}\nonumber
\begin{aligned}
&\abs{\Phi_1^G\pare{\ul\pi^{n}\pare{\cdot}}-\Phi_1^G\pare{\ul\pi\pare{\cdot}}}
\\[5pt]
&\quad\leq  2\sup_{t\in [0,T]}\abs{\pic{\pi^{n}_{1}(t), G}-\pic{\pi_{1}(t), G}}
+\int_0^T\abs{B^{G}_i\pare{\ul\pi^{n}(s)}-B^{G}_1\pare{\ul\pi(s)}}ds.
\end{aligned}
\end{align}
Since the first term in the right-hand side converges to $0$ because of \eqref{hu}, the proof is concluded by showing that
\begin{align}\nonumber
\int_0^T\abs{B^{G}_1\pare{\ul\pi^{n}(s)}-B^{G}_1\pare{\ul\pi(s)}}ds\xrightarrow[n\to\infty]{}0.
\end{align}
By the dominated convergence theorem,  it is enough to show that, for every $s\in[0,T]$, 
\begin{align}\nonumber
\abs{B^{G}_1\pare{\ul\pi^{n}(s)}-B^{G}_1\pare{\ul\pi(s)}}\xrightarrow[n\to+\infty]{}0.
\end{align}
As $s$ is fixed, we omit writing it in the following computations:
\begin{align}\nonumber
\begin{aligned}
&\abs{B^{G}_1\pare{\ul\pi^{n}}-B^{G}_1\pare{\ul\pi}}
\\[5pt]
&\quad\leq  \abs{\pic{\pi^{n}_{1},G}-\pic{\pi_1, G}}\\[5pt]
&\quad\blanco{\le}+\abs{\pic{\pi^{n}_{2}, W^{+,1}_{\pi^{n}_{1}}G}-\pic{\pi_2, W^{+,1}_{\pi_1}G}}+\abs{\pic{\pi^{n}_{2}, W^{-,1}_{\pi^{n}_{1}}G}-\pic{\pi_2, W^{-,1}_{\pi_1}G}}\\[5pt]
&\quad\blanco{\le}+\abs{\pic{ \mathscr L, W^{+,1}_{\pi^{n}_{1}}G}-\pic{\mathscr L, W^{+,1}_{\pi_1}G}}+\abs{\pic{\mathscr L, W^{-,1}_{\pi^{n}_{1}}G}-\pic{\mathscr L, W^{-,1}_{\pi_1}G}}\\[5pt]
&\quad\leq\abs{\pic{\pi^{n}_{1}, G}-\pic{\pi_1, G}}+\|G\|_\infty\abs{\pic{\pi_2^n, W_{\pi_1}^{+,1}-W_{\pi_1}^{-,1}}-\pic{\pi_2, W_{\pi_1}^{+,1}-W_{\pi_1}^{-,1}}}\\[5pt]
&\quad\blanco{\le}+2\beta_1\|G\|_\infty\sup_{r\in\BB T}\abs{\pic{\pi^{n}_{1}, \phi_{1}(r, \cdot)}-\pic{\pi_{1}, \phi_{1}(r, \cdot)}}.
\end{aligned}
\end{align}
The first and the second term in the right-hand side converge to $0$ as $n$ goes to $\infty$.
To prove that also the third one vanishes, we divide the macroscopic torus in $h^{-1}$ intervals of length $h$  and call $I_j$ the $j$-th interval centered in $r_j$.
Fixing $r\in\BB T$ and letting $I_{k}$ be the interval in which $r$ is contained, we have
\begin{align}\nonumber
\begin{aligned}
\abs{\pic{\pi^{n}_{1}, \phi_{1}(r, \cdot)}-\pic{\pi_{1}, \phi_{1}(r, \cdot)}}\leq
&\abs{\pic{\pi^{n}_{1}, \phi_{1}(r, \cdot)}-\pic{\pi^{n}_{1}, \phi_{1}(r_{k}, \cdot)}}
\\[5pt]
&+\abs{\pic{\pi^{n}_{1}, \phi_{1}(r_{k}, \cdot)}-\pic{\pi_{1}, \phi_{1}(r_{k}, \cdot)}}
\\[5pt]
&+\abs{\pic{\pi_{1}, \phi_{1}(r_{k}, \cdot)}-\pic{\pi_{1}, \phi_{1}(r, \cdot)}}
\\[5pt]
\leq &2h+\abs{\pic{\pi^{n}_{1}, \phi_{1}(r_{k}, \cdot)}-\pic{\pi_{1}, \phi_{1}(r_{k}, \cdot)}}.
\end{aligned}
\end{align}
Then
\begin{align}\nonumber
\sup_{r\in\BB T}\abs{\pic{\pi^{n}_{1}, \phi_{1}(r, \cdot)}-\pic{\pi_{1}, \phi_{1}(r, \cdot)}}&\leq 2h+\max_{j\in\llav{1,\ldots, h^{-1}}}\abs{\pic{\pi^{n}_{1}, \phi_{1}(r_{ j}, \cdot)}-\pic{\pi_{1}, \phi_{1}(r_{ j}, \cdot)}},
\end{align}
which converges to $0$ letting first $n\to\infty$ and after $h\to 0$.
This concludes the proof.
\end{proof}

\begin{proposition}
Let $G\in C\pare{\BB T, \BB R}$.
$P^*$  is concentrated on trajectories  $\ul\pi\pare{\cdot}$  such that, for all $i\in\llav{1,2,3}$ and $t\in [0,T]$, 
\begin{align}\nonumber
\pic{\pi_{i}\pare{t},G}=\pic{\pi_{i}\pare{0},G}+\int_0^tB^{G}_i\pare{\ul\pi(s)}ds
\end{align}
(recall the definition of $B^{G}_i$ in \eqref{uiop}).
\end{proposition}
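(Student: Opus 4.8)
The plan is to derive the proposition from the identities \eqref{ser} and the analogous one for $i=3$, combined with Lemmas \ref{rrrrr} and \ref{nnb} and Proposition \ref{ddkk}. Fix $i\in\llav{1,2,3}$ and $G\in C\pare{\BB T,\BB R}$; since $\Phi_i^G\pare{\ul\pi\pare{\cdot}}=0$ is exactly the statement that the claimed integral identity holds for every $t\in\corch{0,T}$, and there are only three indices $i$, it is enough to show that $\BB E_{P^*}\pare{\Phi_i^G}=0$.

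The first step is to transfer this expectation to the prelimit. The functional $\Phi_i^G$ is bounded on all of $\CAL D^3$: indeed $\abs{\pic{\pi_i\pare{t},G}}\le\norm{G}_\infty$ because $\abs{\pi_i\pare{t}}\pare{\BB T}\le 1$, and $\abs{B_i^G\pare{\ul\pi}}\le C_1\norm{G}_\infty$ uniformly in $\ul\pi$ because $\abs{W^{\pm,j}_{\pi_j}}\le\frac{1}{2}$, so $\Phi_i^G\le\pare{2+C_1T}\norm{G}_\infty$. By Lemma \ref{nnb}, $\Phi_i^G$ is continuous at every point of $C\pare{\corch{0,T},\CAL M_1}^3$, which by Lemma \ref{rrrrr} carries full $P^*$-mass; hence $P^*$ does not charge the set of discontinuity points of $\Phi_i^G$. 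Since $P^{\gamma_k}$ converges weakly to $P^*$, and $\Phi_i^G$ is bounded and $P^*$-almost everywhere continuous, it follows by the standard mapping theorem for weak convergence that $\BB E_{P^{\gamma_k}}\pare{\Phi_i^G}\to\BB E_{P^*}\pare{\Phi_i^G}$, so it remains to prove $\BB E_{P^{\gamma_k}}\pare{\Phi_i^G}\to 0$.

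The second step is to compare, along the prelimit paths, the discrete drift $B_{\gamma_k,i}^G$ with its limit $B_i^G$. Evaluating $\Phi_i^G$ at the image path $\Gamma\circ\ul\sigma_{\gamma_k}\pare{\cdot}$ and using \eqref{ser} (and the analogous identity for $i=3$), the quantity inside the supremum over $t$ equals $M^G_{\gamma_k,i}\pare{t}+\int_0^t\corch{B_{\gamma_k,i}^G\pare{\ul\sigma_{\gamma_k}\pare{s}}-B_i^G\pare{\chi\sigma_{\gamma_k,1}\pare{s},\chi\sigma_{\gamma_k,2}\pare{s},\chi\eta_{\gamma_k}\pare{s}}}ds$. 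Because $\pic{\sigma_\gamma,\cdot}=\pic{\chi\sigma_\gamma,\cdot}$ and $W^{\pm,j}_{\chi\sigma_\gamma}=W^{\pm,j}_{\sigma_\gamma}$, this difference of drifts amounts only to replacing the counting measure $\mathscr C_{\gamma_k}$ by the Lebesgue measure $\mathscr L$ in the terms of the form $\pic{\cdot,\pare{W^{+,j}\pm W^{-,j}}G}$. The family $\llav{\pare{W^{+,j}_\sigma\pm W^{-,j}_\sigma}G}$, indexed by all $\gamma$ and all $\sigma\in\llav{-1,1}^{\Lambda_\gamma}$, is bounded by $\norm{G}_\infty$ and equicontinuous uniformly in $\sigma$ (its modulus of continuity is controlled by a constant times the uniform moduli of continuity of $\phi_j$ and $G$); therefore, dividing $\BB T$ into short intervals $I_j$ exactly as in the proof of Lemma \ref{nnb} and using $\mathscr C_{\gamma_k}\pare{I_j}\to\mathscr L\pare{I_j}$, one obtains a deterministic sequence $\epsilon_{\gamma_k}\to 0$ with $\sup_{s\in\corch{0,T}}\abs{B_{\gamma_k,i}^G\pare{\ul\sigma_{\gamma_k}\pare{s}}-B_i^G\pare{\cdots}}\le\epsilon_{\gamma_k}$. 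Hence $\Phi_i^G\pare{\Gamma\circ\ul\sigma_{\gamma_k}\pare{\cdot}}\le\sup_{t\in\corch{0,T}}\abs{M^G_{\gamma_k,i}\pare{t}}+T\epsilon_{\gamma_k}$.

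The last step is to take expectations and apply Proposition \ref{ddkk}. Since $\sup_{t\in\corch{0,T}}\abs{M^G_{\gamma_k,i}\pare{t}}$ is bounded by the deterministic constant $C_2=\pare{2+C_1T}\norm{G}_\infty$, for every $\zeta>0$ we have
\begin{align}\nonumber
\begin{aligned}
\BB E_{P^{\gamma_k}}\pare{\Phi_i^G}
&\le\BB E\pare{\sup_{t\in\corch{0,T}}\abs{M^G_{\gamma_k,i}\pare{t}}}+T\epsilon_{\gamma_k}
\\[5pt]
&\le\zeta+C_2\,\BB P\pare{\sup_{t\in\corch{0,T}}\abs{M^G_{\gamma_k,i}\pare{t}}>\zeta}+T\epsilon_{\gamma_k}
\\[5pt]
&\le\zeta+\frac{C_2\,C\norm{G}_\infty T\gamma_k}{\zeta^2}+T\epsilon_{\gamma_k}.
\end{aligned}
\end{align}
Letting $\gamma_k\to 0$ and then $\zeta\to 0$ forces $\BB E_{P^*}\pare{\Phi_i^G}=\lim_k\BB E_{P^{\gamma_k}}\pare{\Phi_i^G}=0$, which proves the proposition. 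I expect the only genuinely delicate point to be the uniform-in-$\sigma$ control of $B_{\gamma_k,i}^G-B_i^G$, that is, the equicontinuity of the family $\llav{W^{\pm,j}_\sigma}$ that allows the counting measure $\mathscr C_{\gamma_k}$ to be traded for the Lebesgue measure $\mathscr L$; everything else is a routine assembly of Lemmas \ref{rrrrr} and \ref{nnb}, Proposition \ref{ddkk}, and weak convergence.
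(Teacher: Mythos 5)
Your proof is correct and follows essentially the same route as the paper's: control of the martingale term via Proposition \ref{ddkk}, replacement of the counting measure $\mathscr{C}_{\gamma_k}$ by the Lebesgue measure in the drift, and passage to the limit via Lemmas \ref{rrrrr} and \ref{nnb}. The only cosmetic difference is that you work with $\BB E_{P^{\gamma_k}}\pare{\Phi_i^G}$ and the bounded, $P^*$-a.e.\ continuous version of the mapping theorem where the paper works with $P^{\gamma_k}\pare{\Phi_i^G>\varepsilon}$ and the portmanteau inequality; your equicontinuity justification for trading $\mathscr{C}_{\gamma_k}$ for $\mathscr{L}$ is, if anything, slightly more careful than the paper's appeal to $C^1$ test functions.
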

\begin{proof}
We will prove the theorem only for $i=1$ as the other cases are similiar.
By \eqref{ser} and proposition \eqref{ddkk}, we can deduce that
\begin{align}\nonumber
P^{\gamma_k}\pare{\sup_{t\in [0,T]}\abs{\pic{\pi_{1}\pare{t},G}-\pic{\pi_{1}\pare{0},G}-\int_0^t  B_{\gamma_k, 1}^{G}\pare{\ul\pi(s)}ds}>2\varepsilon}\xrightarrow[\gamma_k\to 0]{}0.
\end{align}
Since
\begin{align}\nonumber
\sup\llav{\abs{\pic{\mathscr{C}_\gamma, F}-\pic{\mathscr{L}, F}}:
F\in C^1\pare{\BB T,\BB R} \tn{ such that }\max\llav{\|F\|_\infty, \|F'\|_\infty}\leq 1}
\xrightarrow[\gamma\to 0]{}0,
\end{align}
it follows that
\begin{align}\nonumber
P^{\gamma_k}\pare{\sup_{t\in [0,T]}\abs{\pic{\pi_{1}\pare{t},G}-\pic{\pi_{1}\pare{0},G}-\int_0^t  B^{G}_1\pare{\ul\pi(s)}ds}>\varepsilon}\xrightarrow[\gamma_k\to 0]{}0.
\end{align}
By lemma \eqref{nnb}, the function $\Phi_1^G$ is continuous in the elements of $C\pare{[0,T], \CAL M}^3$; then, using the continuous map theorem (Theorem 2.7 of \cite{Bil68}, for instance) and Lemma \eqref{rrrrr},  we can conclude that
\begin{align}\nonumber
\begin{aligned}
&P^*\pare{\sup_{t\in [0,T]}\abs{\pic{\pi_{1}\pare{t},G}-\pic{\pi_{1}\pare{0},G}-\int_0^tB^{G}_1\pare{\ul\pi(s)}ds}>\varepsilon}\\[5pt]
&\quad\leq\liminf_{\gamma_k\to 0}P^{\gamma_k}\pare{\sup_{t\in [0,T]}\abs{\pic{\pi_{1}\pare{t},G}-\pic{\pi_{1}\pare{0},G}-\int_0^tB^{G}_1\pare{\ul\pi(s)}ds}>\varepsilon}=0,
\end{aligned}
\end{align}
which implies our claim.
\end{proof}

Thus every limit point of the sequence $P^\gamma$ is concentrated on continuous trajectories $\ul\pi\pare{\cdot}$ whose densities $u_1(t,r), u_2(t,r), v(t,r)$ respectively satisfy  equations \eqref{hydro1}-\eqref{hydro3}.
For every $\varepsilon>0$ and $i\in\llav{1,2}$, 
\begin{align}\nonumber
\begin{aligned}
&P^*\pare{\abs{\pic{\pi_i(0), G}-\int_{\BB T}G(r)\psi_i(r)dr}>\varepsilon} \\[5pt]
&\ \ \ \ \ \leq\liminf_{\gamma_k\to 0}P^{\gamma_k}\pare{\abs{\pic{\pi_i(0),G}-\int_{\mathbb{T}}G(r)\psi_i(r)dr}>\varepsilon}\\[5pt]
&\ \ \ \ \ =\lim_{\gamma_k\to 0}\BB P\pare{\abs{\gamma_k\sum_{x\in \Lambda_{\gamma_\kappa}}\sigma_{\gamma,i}(x)G(\gamma_k x)-\int_{\mathbb{T}^d}G(r)\psi_i(r)dr}>\varepsilon}=0
\end{aligned}
\end{align}
and, in the same way,
\begin{align}\nonumber
\begin{aligned}
&P^*\pare{\abs{\pic{\pi_3(0), G}-\int_{\BB T}G(r)\psi_1(r)\psi_2(r)dr}>\varepsilon}=0;
\end{aligned}
\end{align}
then we can conclude that $u_i(0,r)=\psi_i(r)$ for $i\in\llav{1,2}$ and $v(0,r)=\psi_1(r)\psi_2(r)$. The existence and uniqueness of the solution of system \eqref{hydro1}-\eqref{hydro3} guarantees the uniqueness of the limit point $P^*$, which is concentrated on the  trajectory $\ul \pi^*\pare{\cdot}$ whose densities are the solutions of \eqref{hydro1}-\eqref{hydro3}.
Since convergence in distribution to a deterministic variable implies convergence in probability, we get
\begin{align}\label{er}
\rho\pare{\ul\pi_\gamma\pare{\cdot}, \ul\pi^*\pare{\cdot}}\xrightarrow[\gamma\to 0]{}0
\end{align}
in $\BB P$-probability.
From \eqref{er} and the fact that $\ul\pi^*\pare{\cdot}\in C\pare{[0,T], \CAL M_1}^3$, as we have already proved in Lemma \eqref{nnb}, we can conclude our assertion.

\bigskip

{\bf Acknowledgments.}
It is a great pleasure to thank Errico Presutti for suggesting us the problem and for his continuous advising.
We also acknowledge (in alphabetical order) fruitful discussions with In\'es Armend\'ariz,  Anna De Masi, Pablo Ferrari, Claudio Landim,  Ellen Saada, Livio Triolo, and Maria Eul\'alia Vares.
The authors also acknowledge the hospitality of Laboratorie MAP5 at Universit\'e Paris Descartes.

\bibliographystyle{amsalpha}
\bibliography{biblio}

\end{document}